\documentclass[11pt]{article}

\newcommand\version{September 20, 2026}

\usepackage[margin=1in]{geometry}
\usepackage[T1]{fontenc}
\usepackage{lmodern}
\usepackage{amsmath,amssymb,amsthm,mathtools}
\usepackage{microtype}
\usepackage{xcolor}
\usepackage[colorlinks=true,linkcolor=blue!50!black,citecolor=blue!50!black,urlcolor=blue!50!black]{hyperref}
\hypersetup{pdftitle={Sharp comparison between the perimeter and its fractional analogue in two dimensions},pdfauthor={}}
\newtheorem{theorem}{Theorem}[section]
\newtheorem{lemma}[theorem]{Lemma}
\newtheorem{proposition}[theorem]{Proposition}

\theoremstyle{remark}

\newcommand{\R}{\mathbb R}
\newcommand{\dd}{\,\mathrm d}

\numberwithin{equation}{section}
\allowdisplaybreaks
\begin{document}

\title{Sharp comparison between the perimeter and its fractional analogue in two dimensions}

\author{Rupert L. Frank\thanks{Mathematisches Institut, Ludwig-Maximilians Universit\"at M\"unchen, Theresienstr.~39, 80333 M\"unchen, Germany; and Munich Center for Quantum Science and Technology, Schellingstr.~4, 80799 M\"unchen, Germany. Email: \texttt{r.frank@lmu.de}.}
\and
Paata Ivanisvili\thanks{Department of Mathematics, University of California, Irvine, 510C Rowland Hall, Irvine, CA 92697-3875, USA. Email: \texttt{pivanisv@uci.edu}.}}

\date{\version}

\maketitle

\begin{abstract}
We show that the fractional perimeter of any order $0<s<1$ is maximal, among all planar sets of fixed perimeter, for the disk.
\end{abstract}


\section{Main result}

We are interested in measurable sets $E\subset\R^2$ whose perimeter in the sense of De Giorgi, denoted by $P(E)$, is finite. If such a set has finite measure, then the isoperimetric inequality states that
\begin{equation}
    \label{eq:isoper}
    P(E) \geq \sqrt{4\pi}\, |E|^{1/2}
\end{equation}
with equality if and only if $E$ is a disk, up to sets of measure zero. We refer to \cite{Maggi} for background on finite perimeter sets.

Of great interest recently has been the notion of fractional perimeter of order $s\in(0,1)$, which for a measurable set $E\subset\R^2$ is defined by
\begin{equation}\label{eq:defper}
 P_s(E) :=\iint_{E\times E^c} \frac{\dd y\dd x}{|x-y|^{2+s}} \,.
\end{equation}
The fractional isoperimetric inequality is due to \cite{AL} with cases of equality classified in \cite{FS}. It states that
\begin{equation}
    \label{eq:isoperfrac}
    P_s(E) \geq \frac{2^{2-s}\,\pi^{(1+s)/2}}{s(2-s)} \,
    \frac{\Gamma((1-s)/2)}{\Gamma(1-s/2)} \, |E|^{(2-s)/2}.
\end{equation}
with equality if and only if $E$ is a disk, up to sets of measure zero.

Our main result in this paper is a sharp inequality that, in view of \eqref{eq:isoperfrac}, strengthends the classical isoperimetric inequality \eqref{eq:isoper}.

\begin{theorem}\label{thm:plane}
For every $0<s<1$ and every measurable $E\subset\R^2$ with $0<|E|<\infty$ and $P(E)<\infty$, we have
\begin{equation}\label{eq:planar-main}
 P_s(E)\leq \frac{\pi^{s-1/2}}{s(2-s)}\ \frac{\Gamma((1-s)/2)}{\Gamma(1-s/2)} \, P(E)^{2-s} \,.
\end{equation}
Equality holds if and only if $E$ agrees almost everywhere with a disk.
\end{theorem}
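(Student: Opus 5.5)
The plan is to write $P_s$ as a superposition of one-dimensional quantities along lines, and to reduce the theorem to a sharp inequality for each fixed translation length $r>0$. By the layer-cake form of \eqref{eq:defper},
\[
P_s(E)=\int_0^\infty r^{-1-s}\int_{\mathbb S^1}|E\setminus(E+r\omega)|\dd\omega\dd r =: 2\pi\int_0^\infty r^{-1-s} g_E(r)\dd r .
\]
It therefore suffices to prove $g_E(r)\le g_D(r)$ for all $r>0$, where $D$ is a disk with $P(D)=P(E)$. The constant in \eqref{eq:planar-main} should then come out of the explicit value of $g_D$. It would be wise to first check that constant against \eqref{eq:isoperfrac} for the disk, which fixes the normalisation.

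For the per-$r$ inequality, I would slice along lines. For a direction $\omega$ and a line $L\parallel\omega$, the section $E\cap L$ is, for a.e.\ $L$, a finite union of disjoint intervals of lengths $\ell_1,\dots,\ell_N$; this uses $P(E)<\infty$ and the slicing theory in \cite{Maggi}. The one-dimensional set difference satisfies $|F\setminus(F+r)|\le\sum_i\min(\ell_i,r)$, since each component contributes at most $\min(\ell_i,r)$. Integrating over lines and directions gives
\[
2\pi g_E(r)\le \int_{\mathcal L}\sum_{\text{chords }c\subset L}\min(|c|,r)\dd L ,
\]
where $\dd L$ is the invariant measure on lines. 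For a convex set (hence for the disk) this is an equality, because every section is a single interval. Crofton's formula $\int_{\mathcal L}N(L)\dd L=2P(E)$, with $N(L)$ the number of chords, controls the total mass of the chord distribution. Thus the problem becomes: among chord distributions of sets with perimeter $P$, maximise the integral of the concave, bounded, nondecreasing function $\ell\mapsto\min(\ell,r)$.

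The main obstacle is this extremal problem. Crude Jensen/Hölder arguments using only Crofton and $\int\sum|c|\dd L=\pi|E|$, together with \eqref{eq:isoper}, are not sharp, since they would force all chords to have equal length, which the disk does not satisfy. My first attempt would be to reduce to convex $E$. Replacing each connected component by its convex hull and translating components apart should not decrease $\sum\min(\ell_i,r)$ per line, and it does not increase the perimeter in the plane; I would check this carefully via the chord structure. For convex $E$ with support function $h$, I would parametrise lines by $(\theta,p)$ with chord length $\ell(\theta,p)$. Then I would show that $\int\min(\ell,r)$ is maximised by the disk at fixed perimeter $\int_0^{2\pi}h\dd\theta$. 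For this I would try a Steiner/Blaschke-type symmetrisation argument, or a Fourier expansion of $h$ exploiting the concavity of $\min(\cdot,r)$ and the fact that the Crofton mass is fixed.

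For the equality case, equality in \eqref{eq:planar-main} forces equality $g_E(r)=g_D(r)$ for a.e.\ $r$. In particular this must hold for small $r$ and in the convexification step, which should give that $E$ is convex. The symmetrisation step is strict unless $E$ is a disk, and this yields the characterisation.
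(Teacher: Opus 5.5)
\textbf{There is a genuine gap: the per-$r$ inequality is never proved, and the tools you propose for it point the wrong way.}

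Your identity $P_s(E)=2\pi\int_0^\infty r^{-1-s}g_E(r)\dd r$ is correct, and a pointwise bound $g_E(r)\le g_D(r)$ would imply \eqref{eq:planar-main}. But that bound is the whole difficulty, and you leave it open. It is also considerably stronger than the theorem. For convex sets it amounts to the sharp fixed-perimeter bound for every chord functional $\int\phi(\ell)\dd L$ with $\phi$ concave, nondecreasing and $\phi(0)=0$, since such $\phi$ are nonnegative combinations of $\ell$ and the truncations $\min(\ell,r)$. In particular it covers all $s$ at once.

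\emph{Why your suggested tools fail.}
\begin{itemize}
\item Since $g_E(r)=rP(E)/\pi+o(r)$ as $r\to0$, any operation that lowers the perimeter also lowers $g_E(r)$ for small $r$. So it moves both sides of the inequality in the same direction, which is exactly the competition the paper stresses.
\item This applies to Steiner/Blaschke symmetrization. That is the tool for the fixed-\emph{area} inequality \eqref{eq:isoperfrac}, not for an upper bound at fixed perimeter.
\item It applies equally to your convex-hull step, which is in fact false as stated. A line crossing both thick arms of a U-shaped set meets it in two chords of length $\ge r$, contributing $2r$. The hull chord on that line contributes at most $r$.
\item A Fourier expansion in the support function gives only second-variation information at the disk, not a global bound.
\item Your equality discussion relies on strictness in these unproven steps, so it is unsupported as well.
\end{itemize}

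\textbf{How the paper proceeds instead.} The paper never works threshold by threshold.
\begin{itemize}
\item For convex bodies it first proves existence of a maximizer, using Blaschke selection and Lemma~\ref{lem:interpol}.
\item It derives the Euler--Lagrange equation from the first variation of $I_{1-s}$ in the support function. It then bootstraps this to $C^2$ regularity with $H_s^K=\lambda\kappa$.
\item Finally it compares $K$ with the tangent disks of radius $\rho_{\min}$ and $\rho_{\max}$ at the points of extremal radius of curvature. The first lies inside $K$ and the second contains $K$.
\item This closes because $\rho\mapsto\rho H_s^{D_\rho}=c_s\rho^{1-s}$ is strictly increasing. For your truncated kernel the analogous quantity is $2\rho\arcsin\bigl(r/(2\rho)\bigr)$, which is \emph{decreasing} for $\rho>r/2$. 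So even this scheme says nothing about the per-$r$ problem; homogeneity at fixed $s$ is used essentially.
\end{itemize}

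For general sets the paper keeps the perimeter fixed rather than lowering it.
\begin{itemize}
\item It first splits off components and holes, using $P_s(A\cup B)<P_s(A)+P_s(B)$ and $\sum a_k^{2-s}\le(\sum a_k)^{2-s}$.
\item It then reflects pockets of the convex hull across supporting lines. This preserves the perimeter and strictly increases $P_s$, made rigorous via Fleming's theorem in Theorem~\ref{thm:reductionconv}.
\end{itemize}
That reflection computation works for any radial kernel, so it is the right replacement for your hull step. Even with it, the convex per-$r$ inequality would still remain unproved.
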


This theorem settles the two-dimensional case of an open problem posed by Maz'ya \cite[Section 3.1]{Mazya}. The higher-dimensional case remains open. The local minimality of the disk (or a ball) is not explicitly state in \cite{ACMM}, but can be deduced from their arguments. An even stronger conjecture \cite{ACMM}, in any dimension $n\geq 2$, would be that $(P_s(E)/P_s(E^*))^{1/(n-s)}$ is nondecreasing with respect to $s\in (0,1)$. Here $E^*$ is the disk (or ball) of the same measure as $E$. This remains open as well.

What makes Theorem \ref{thm:plane} conceptually interesting and difficult to prove is that, among sets of given measure, \emph{both} the left and the right side of \eqref{eq:planar-main} are minimal for a disk. This means there is a competition between the two sides that is stronger than in the classical or the fractional isoperimetric inequalities \eqref{eq:isoper} and \eqref{eq:isoperfrac}. Thus arguments based on symmetrization techniques, which provide proofs of \eqref{eq:isoper} and \eqref{eq:isoperfrac}, seem to be of no use for the proof of Theorem \ref{thm:plane}. Problems with similar competitions appear in several applications; see, for example, \cite{F23} and the recent breakthrough \cite{ChGi}.

The remainder of this paper is devoted to the proof of Theorem \ref{thm:plane}. Our strategy consists in first proving the inequality for convex sets (Section \ref{sec:planar-convex}) and then to extend it to general sets (Section \ref{sec:planar-general}). In both cases we make use of planarity.

To prove the inequality in the convex case, we begin by following a classical route in the calculus of variations. First, we prove the existence of an optimzer, then we derive the corresponding Euler--Lagrange equation, from which we deduce regularity of optimizers. With these facts at our disposal, we can finally use some subtle, but elementary ad-hoc arguments to deduce from the Euler--Lagrange equation that an optimizer necessarily has constant curvature.

The extension from the convex to the general case uses some reflection and convexification arguments that are typical for two-dimensional isoperimetric problems. While straightforward in the smooth case, we need to overcome some technical difficulties when working with general sets of finite perimeter.

Finally, we would like to mention the preprint `A Sharp Planar Fractional Isoperimetric Inequality' by Xiaosheng Lin, Dachun Yang, Sibei Yang, Wen Yuan and Yangyang Zhang, where closely related results are obtained independently and simultaneously. The first version of our paper (which coincides with the present version except for this paragraph) was submitted to the arXiv on September 13 and appeared there on September 15. On September 17, the aforementioned preprint appeared on the arXiv, after having been submitted on August 13. The two approaches are substantially different and give different perspectives on the same problem. While both arguments involve a reduction to the convex setting, our proof is based on an analysis of the Euler–Lagrange equation, while theirs is based on the parallel flow. Our approach allows us to classify the cases of equality in the sharp inequality.


\section{The inequality for convex sets}\label{sec:planar-convex}

Our objective in this section is to prove the following result.

\begin{theorem}\label{prop:planar-convex}
Theorem~\ref{thm:plane} holds under the additional assumption that $E$ is convex.
\end{theorem}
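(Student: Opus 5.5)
We follow the route announced in the introduction: existence of a maximizer, Euler--Lagrange equation, regularity, and then an elementary argument forcing constant curvature. By scaling, $P_s(\lambda E)=\lambda^{2-s}P_s(E)$ and $P(\lambda E)=\lambda P(E)$, so the claim is equivalent to
\[
\sup\{P_s(E): E \text{ convex},\ P(E)=1\}
\]
being attained exactly by disks. (The constant in \eqref{eq:planar-main} is exactly $P_s(B)/P(B)^{2-s}$ for a disk $B$, which is checked from \eqref{eq:isoperfrac} at equality, since $|B|^{1/2}=P(B)/\sqrt{4\pi}$.)

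\emph{Existence.} Take a maximizing sequence of convex sets with perimeter $1$, translated to contain the origin. Their diameters are at most $1/2$, so by Blaschke selection a subsequence converges in Hausdorff distance to a compact convex set $K$. Perimeter is continuous under this convergence for convex bodies, so $P(K)=1$. We must rule out degeneration to a segment of length $1/2$. For thin sets of width $\varepsilon$ one has $P_s\lesssim \varepsilon^{1-s}\to0$, because
\[
P_s(E)\le \int_E \int_{|y-x|>\operatorname{dist}(x,\partial E)}|x-y|^{-2-s}\dd y\dd x \approx \int_E \operatorname{dist}(x,\partial E)^{-s}\dd x .
\]
This is strictly less than the disk value, so $K$ has nonempty interior. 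Then $P_s$ is continuous along the sequence, since the sets converge in $L^1$ and the kernel singularity is controlled uniformly for convex sets with a lower bound on inradius. Hence $K$ is a maximizer.

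\emph{Euler--Lagrange equation and regularity.} Parametrize $\partial K$ via its support function $h$ with $h+h''\ge0$ as a measure. Perturbations $h+t\varphi$ that remain convex give one-sided first variations. The variation of $P_s$ in normal direction is $\int_{\partial K} H_s\,\varphi$, where $H_s$ is the fractional curvature
\[
H_s(x)=\mathrm{p.v.}\!\int (\chi_{K^c}-\chi_K)(y)\,|x-y|^{-2-s}\dd y .
\]
The variation of $P$ is $\int \varphi\,d\theta$, i.e.\ $\int\kappa\varphi\,ds$. Maximality then gives $H_s=\lambda\,\kappa$ on the part of $\partial K$ where $K$ is strictly convex, with $\lambda=(2-s)P_s(K)$ from scaling. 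On flat pieces and corners we get inequalities only. Corners should be excluded: at a corner $H_s$ behaves like $r^{-s}$, whereas cutting the corner decreases $P$ at first order while changing $P_s$ by a smaller order, contradicting maximality. Flat pieces should be excluded because there $\kappa=0$ while $H_s>0$ (for convex $K$, $H_s>0$ everywhere), and pushing the segment outward increases $P_s$ at first order with only second-order perimeter cost. Once the equation holds everywhere, the boundary is $C^{1,1}$. Bootstrapping $\kappa=H_s/\lambda$ with Schauder-type estimates for $H_s$ (which costs $1+s$ derivatives) yields $C^\infty$, and $\kappa>0$ everywhere.

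\emph{Rigidity, the main obstacle.} It remains to show that smooth strictly convex solutions of $H_s=\lambda\kappa$ are disks. Moving planes does not apply directly because the equation is nonlocal and of mixed order, so my first attempt is an extremal-point comparison. Let $x_0$ be a point of maximal curvature and $x_1$ a point of minimal curvature. Near $x_0$ the set is locally "rounder" than near $x_1$, and I would try to show $H_s(x_0)/\kappa(x_0)<H_s(x_1)/\kappa(x_1)$ unless $\kappa$ is constant. For this, compare $K$ with the osculating disks at these points, using the planar fact that the osculating disk at a maximum-curvature point lies inside $K$ and the one at a minimum-curvature point contains $K$ (Blaschke's rolling theorem). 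Then $H_s(x_0)\ge H_s(D_{1/\kappa(x_0)})=c_s\kappa(x_0)^s$ and $H_s(x_1)\le c_s\kappa(x_1)^s$. Combined with the equation this gives $\lambda\kappa(x_0)^{1-s}\ge c_s\ge\lambda\kappa(x_1)^{1-s}$, which is consistent and so not yet a contradiction. The sign goes the wrong way, so one needs more: a second-variation or refined inclusion argument using the nonlocal tail, or an integrated identity. I would try integrating the equation against the support function, $\int h\,H_s\,ds=\lambda\int h\kappa\,ds$. This gives $(2-s)P_s=\lambda P$ by Pohozaev-type scaling, which only fixes $\lambda$. I would therefore also test against $\kappa$ or against $h^2$ and combine this with Wirtinger's inequality on the Fourier coefficients of $h$, aiming for an inequality that is strict unless $h$ is a first-order trigonometric polynomial plus a constant, i.e.\ $K$ is a disk. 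This step is where I expect the real difficulty.
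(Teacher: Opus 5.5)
\paragraph{The rigidity step has a sign error, and that error is what blocks your proof.}
Fractional curvature is \emph{order-reversing} under inclusion: if convex sets $A\subset B$ are tangent at a common regular boundary point $x$, then $H_s^B(x)\le H_s^A(x)$. There are three ways to see this:
\begin{itemize}
\item With your principal-value formula it follows from the pointwise inequality $\chi_{B^c}-\chi_B\le\chi_{A^c}-\chi_A$.
\item With the radial formula \eqref{eq:meancurvfrac} it follows from $\rho_{A,x}\le\rho_{B,x}$.
\item It matches $H_s=c_s r^{-s}$ for a disk of radius $r$, which decreases as the disk grows.
\end{itemize}
So the osculating disk at the maximum-curvature point $x_0$, which lies inside $K$, gives $H_s(x_0)\le c_s\kappa(x_0)^s$. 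The osculating disk at the minimum-curvature point $x_1$, which contains $K$, gives $H_s(x_1)\ge c_s\kappa(x_1)^s$. Both are the opposite of what you wrote. Combined with $H_s=\lambda\kappa$ they give
\[
\lambda\,\kappa(x_0)^{1-s}\le c_s\le\lambda\,\kappa(x_1)^{1-s},
\]
hence $\max\kappa\le\min\kappa$. The curvature is therefore constant and $K$ is a disk.

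This is exactly the paper's proof of Proposition~\ref{cor:uniform-chord}. There the two inclusions are derived from the support-function identity $F''+F=\rho-r$, $F(\theta_0)=F'(\theta_0)=0$, which is the planar case of the rolling theorem you cite. As written, however, you abandon this argument and replace it with an unspecified plan (testing against $\kappa$ or $h^2$, Wirtinger's inequality) from which no inequality is actually derived. So your proof is incomplete at precisely the decisive step, and the Fourier route is not needed.

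\paragraph{The earlier steps are in the right spirit but looser than necessary.}
Restricting to perturbations that keep $h$ a support function gives only one-sided variations. That is why you need separate corner-cutting and segment-pushing arguments, and their details you do not check. For example, pushing out a segment that meets $\partial K$ tangentially does not preserve convexity, and its perimeter cost is not second order.

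The paper avoids this by varying through the Wulff shapes $[h_K+tf]$ for arbitrary $f\in C(\mathbb S^1)$. Since $h_{[h]}\le h$, Cauchy's formula gives $P([h])\le\int h$, with equality at $h=h_K$. So maximality yields a genuine two-sided first variation, and Lemma~\ref{lem:firstvar} turns it into the measure identity \eqref{eq:plane-6}. From that identity:
\begin{itemize}
\item Corners are excluded at once: an $f$ supported in an open normal arc gives zero on the left and a positive value on the right.
\item Segments are excluded because they would produce an atom on the left.
\end{itemize}
An $L^p$ bootstrap then gives $C^2$ regularity with $\kappa>0$, which is all the rigidity step uses, so Schauder estimates up to $C^\infty$ are superfluous.

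Your handling of existence (the bound $P_s\lesssim\varepsilon^{1-s}$ for thin sets) and of the value of the constant (from the equality case of \eqref{eq:isoperfrac}) is acceptable. Continuity of $P_s$ along the sequence is obtained most cleanly from the interpolation inequality, Lemma~\ref{lem:interpol}.
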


Without loss of generality we may restrict our attention to \emph{convex bodies}, that is, to compact, convex sets with non-empty interior.


\subsection{Existence of a convex
maximizer}\label{existence-of-a-convex-maximizer}

In this subsection we prove that the optimization corresponding to the inequality in Theorem \ref{prop:planar-convex},
\begin{equation}\label{eq:optproblem}
    \sup \left\{ \frac{P_s(K)}{P(K)^{2-s}} :\ K\subset\R^2 \ \text{convex body} \right\},
\end{equation}
has an optimizer. 

For the fact that \eqref{eq:optproblem} is finite, even without the convexity constraint, see \cite{Mazya}. By scaling, it is enough to maximize $P_s(K)$ over convex bodies with $P(K) = 1$. Such bodies have diameter at most \(1/2\). Translate every body to meet the origin; they then lie in a fixed disk. The Blaschke selection theorem \cite[Theorem 1.8.7]{Schneider} gives a Hausdorff-convergent subsequence $(K_j)$, with limit a compact convex set \(K_*\).

The fact that Hausdorff convergence of bounded convex bodies gives \(L^1\) convergence, together with the lower semicontinuity of the perimeter  under $L^1$-convergence \cite[Proposition 12.15]{Maggi}, gives
\[
P(K_*) \leq \liminf_{j\to\infty} P(K_j) = 1 \,.
\]
In passing, we mention that, when $K_*$ has nonempty interior, using Cauchy's formula (see \eqref{eq:plane-2} below), we can even deduce continuity of the perimeter, but lower semicontinuity suffices for our purposes.

We also need continuity of the fractional perimeter along this sequence. The inequality in Lemma \ref{lem:interpol} below, applied to \(u=\chi_{K_j}-\chi_{K_*}\), the fact that \([\chi_E]_{W^{s,1}}=2P_s(E)\) and the reverse triangle inequality give
\[
2 | P_s(K_j) - P_s(K_*)| \leq C_s |K_j\Delta K_*|^{1-s} \left( P(K_j) + P(K_*) \right)^s \leq 2 C_s |K_j\Delta K_*|^{1-s} \,.
\]
The $L^1$-convergence of $(K_j)$ therefore implies \(P_s(K_j)\to P_s(K_*)\).

Since the supremum in \eqref{eq:optproblem} is positive and since $P_s(K_j)$ converges to this number, we deduce that $P_s(K_*)$ is positive. This, in turn, implies that $K_*$ has nonempty interior. In particular, $P(K_*)>0$.

By lower semicontinuity of the perimeter and continuity of the fractional perimeter we deduce that $K_*$ attains the supremum in \eqref{eq:optproblem}. This is what we wanted to show.

\medskip

The argument in this subsection required the following simple interpolation inequality.

\begin{lemma}\label{lem:interpol}
    Let $0<s<1$. Then there is a constant $C_s<\infty$ such that for any $u\in BV(\R^2)$, one has
    \begin{equation}
[u]_{W^{s,1}(\mathbb R^2)}\leq C_s\|u\|_1^{1-s}|Du|(\mathbb R^2)^s. \label{eq:plane-3}
\end{equation}
\end{lemma}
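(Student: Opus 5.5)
The plan is to write the Gagliardo seminorm in terms of translations and split the integral over increments at a threshold radius that we optimize at the end. Changing variables $y=x+h$ gives
\[
[u]_{W^{s,1}(\R^2)}=\iint_{\R^2\times\R^2}\frac{|u(x)-u(y)|}{|x-y|^{2+s}}\dd x\dd y=\int_{\R^2}\frac{\|u(\cdot+h)-u\|_1}{|h|^{2+s}}\dd h \,.
\]
For the translation differences we use two elementary bounds. The first is the trivial estimate $\|u(\cdot+h)-u\|_1\le 2\|u\|_1$. The second is the BV estimate $\|u(\cdot+h)-u\|_1\le |h|\,|Du|(\R^2)$. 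To prove the latter, we first take $u\in C_c^\infty$ and write $u(x+h)-u(x)=\int_0^1 \nabla u(x+th)\cdot h\dd t$. Integrating in $x$ and using Fubini gives the bound with $\|\nabla u\|_1$ in place of $|Du|(\R^2)$. We then pass to general $u\in BV(\R^2)$ by the standard strict approximation: choose $u_k\in C_c^\infty$ with $u_k\to u$ in $L^1$ and $\|\nabla u_k\|_1\to|Du|(\R^2)$ (see \cite{Maggi}). The left side converges by $L^1$ continuity of translations.

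Next we fix $R>0$ and split the $h$-integral into $|h|<R$ and $|h|\ge R$. On $|h|<R$ we use the BV bound, which gives
\[
|Du|(\R^2)\int_{|h|<R}|h|^{-1-s}\dd h=\frac{2\pi}{1-s}\,R^{1-s}\,|Du|(\R^2) \,.
\]
This integral is finite precisely because $s<1$. On $|h|\ge R$ we use the trivial bound, which gives
\[
2\|u\|_1\int_{|h|\ge R}|h|^{-2-s}\dd h=\frac{4\pi}{s}\,R^{-s}\,\|u\|_1 \,.
\]
This integral is finite precisely because $s>0$.

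Finally we optimize the sum over $R$. If $|Du|(\R^2)=0$, then $u$ is constant, and since $u\in L^1$ it follows that $u=0$; the inequality is then trivial. Otherwise we choose $R=\|u\|_1/|Du|(\R^2)$. Both terms then become constant multiples of $\|u\|_1^{1-s}|Du|(\R^2)^s$, which yields \eqref{eq:plane-3} with
\[
C_s=\frac{2\pi}{1-s}+\frac{4\pi}{s} \,.
\]

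The only step that is not pure computation is the translation estimate for BV functions that are not smooth, and even this is a standard density argument. I therefore do not expect any real obstacle.
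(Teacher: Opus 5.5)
Your proposal is correct and follows the paper's own proof. The paper likewise bounds $\|u(\cdot+z)-u\|_1\le\min\{2\|u\|_1,|z|\,|Du|(\R^2)\}$ and integrates against $|z|^{-2-s}$; it simply leaves implicit the integration (your split at $R$ and the choice of $R$) and the density argument for the BV translation estimate, both of which you carry out correctly, including the degenerate case $|Du|(\R^2)=0$.
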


\begin{proof}
    A trivial inequality and the fundamental theorem of calculus give the bound
\(\|u(\cdot+z)-u\|_1\leq\min\{2\|u\|_1,|z||Du|(\R^2)\}\). Multiplying this bound by $|z|^{-2-s}$ and integrating with respect to $z\in\R^2$ yields \eqref{eq:plane-3}.
\end{proof}


\subsection{The Euler--Lagrange equation}\label{first-variation-input-and-its-exact-normalization}

In this subsection we derive the Euler--Lagrange equation for a solution of the optimization problem in \eqref{eq:optproblem}.

We need to introduce some notation for a convex body $K\subset\R^2$. Its support function is defined by
\[
h_K(u) := \sup_{x\in K} u\cdot x
\qquad\text{for all}\ u\in\mathbb S^1 \,.
\]
The normal $\nu_K(x)$ is well defined for $\mathcal H^1$-a.e. $x\in\partial K$ and the map $\partial K \to\mathbb S^1$, $x\mapsto\nu_K(x)$, known as the \emph{Gauss map}, is continuous on the set where it is defined; see, e.g., \cite[Section 2.2]{Schneider}. (Here $\mathcal H^1$ denotes 1-dimensional Hausdorff measure.) The \emph{radial function} of $K$ is defined by
\[
\rho_{K,x}(u):=\sup\{r\geq0:x+ru\in K\}
\]
and the \emph{fractional mean curvature} of order $s\in(0,1)$ is defined by
\begin{align}
    \label{eq:meancurvfrac}
     H_s^K(x):=\frac2s\int_{\{u\in \mathbb S^1:u\cdot\nu_K(x)<0\}}
                 \rho_{K,x}(u)^{-s}\dd \mathcal H^1(u)
\end{align}
for every regular boundary point $x$ (that is, for every $x\in\partial K$ where $\nu_K$ is defined).

\begin{proposition}\label{eleq}
    Let $K$ be a convex body attaining the supremum in \eqref{eq:optproblem}. Then, as finite Borel measures on $\mathbb S^1$,
    \begin{equation}
        (\nu_K)_\#(H_s^K\dd\mathcal H^1|_{\partial K})=\lambda\dd\mathcal H^1|_{\mathbb S^1} \label{eq:plane-6}
    \end{equation}
    with
    \[
    \lambda=\frac{(2-s)P_s(K)}{P(K)}>0 \,.
    \]
\end{proposition}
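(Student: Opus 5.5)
We perturb $K$ within the class of convex bodies by Minkowski addition. For a fixed convex body $L$ and small $t\geq 0$ we set $K_t:=K+tL$; for a signed perturbation we use Wulff shapes. Concretely, take $f\in C^\infty(\mathbb S^1)$ and define
\[
K_t=\{x:\ x\cdot u\leq h_K(u)+tf(u)\ \text{for all } u\}.
\]
For $|t|$ small enough (depending on $f$), Aleksandrov's lemma gives that the support function of $K_t$ is $h_K+tf$ up to $o(t)$, in the sense needed for first variations of volume-type functionals.

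Since $P_s(K_t)/P(K_t)^{2-s}$ is maximal at $t=0$, it suffices to compute the two one-sided derivatives at $t=0$. We then conclude
\[
\frac{d}{dt}P_s(K_t)\Big|_{t=0}=(2-s)\frac{P_s(K)}{P(K)}\,\frac{d}{dt}P(K_t)\Big|_{t=0}.
\]
Because $f$ ranges over a vector space, the inequality obtained for $t\to0^\pm$ becomes an equality.

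\textbf{First step: the perimeter.} Cauchy's formula $P(K)=\int_{\mathbb S^1}h_K\,d\mathcal H^1$ makes $P(K_t)$ affine in $t$ for Minkowski sums. In the Wulff shape setting it gives derivative $\int_{\mathbb S^1} f\,d\mathcal H^1$. The Wulff shape is only a first-order perturbation, but for the perimeter we can use mixed volumes in the plane: $P$ is the first variation of area with respect to the disk.

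\textbf{Second step: the fractional perimeter (main obstacle).} We need
\[
\frac{d}{dt}P_s(K_t)\Big|_{t=0}=\int_{\partial K}H_s^K(x)\,f(\nu_K(x))\,d\mathcal H^1(x).
\]
We write $P_s(K)=\iint_{K\times K^c}$ and note that the boundary layer moves with normal speed $f(\nu_K(x))$ at a.e.\ $x$. Formally, the derivative is $\int_{\partial K} f(\nu)\bigl(\int_{K^c}|x-y|^{-2-s}dy-\int_K|x-y|^{-2-s}dy\bigr)$, which is divergent. We therefore interpret it as a principal value, which can be rewritten via the radial function.

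Computing $\int_{K^c}-\int_K$ in polar coordinates around $x$, and using convexity, gives the following. For directions $u$ pointing into $K$ (that is, $u\cdot\nu<0$), the radial integral splits as $\int_0^{\rho}$ inside and $\int_\rho^\infty$ outside. The antisymmetric cancellation between $u$ and $-u$ then leaves exactly $\frac{2}{s}\int_{u\cdot\nu<0}\rho_{K,x}(u)^{-s}\,d\mathcal H^1(u)=H_s^K(x)$.

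To make this rigorous without regularity, we avoid differentiating under the singular integral. Instead we use the identity
\[
P_s(K)=\frac{1}{s}\int_{\partial K}\!\int_{u\cdot\nu<0}\rho_{K,x}(u)^{-s}\,(-u\cdot\nu)\,d\mathcal H^1(u)\,d\mathcal H^1(x)
\]
(up to constants), or, more robustly, the formula $P_s(K)=c\int_{\mathbb S^1}\int \ldots$ via chord lengths:
\[
P_s(K)=\frac{1}{s(1+s)}\int_{\mathbb S^1}\int_{u^\perp}\sum \ell^{1-s}\ldots
\]
The difference quotient $(P_s(K_t)-P_s(K))/t$ is controlled by $|K_t\Delta K|/t$, which is bounded. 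Dominated convergence then applies, using the integrability of $\rho^{-s}$ near tangential directions; this holds since $s<1$ and $\rho_{K,x}(u)\gtrsim |u\cdot\nu|$ locally.

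\textbf{Conclusion.} With both derivatives in hand, we obtain for all $f\in C(\mathbb S^1)$
\[
\int_{\partial K}H_s^K f(\nu_K)\,d\mathcal H^1=\lambda\int_{\mathbb S^1}f\,d\mathcal H^1.
\]
This is precisely the statement \eqref{eq:plane-6} about the pushforward measure. Finiteness of the left measure follows by taking $f\equiv1$. Finally, $\lambda>0$ because $P_s(K)>0$.
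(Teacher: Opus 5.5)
Your overall plan, perturbing $K$ by Wulff shapes $[h_K+tf]$ and comparing first variations, is the paper's. However, your perimeter step is false for a general convex body, and this is exactly where the paper's key idea sits. The map $t\mapsto P([h_K+tf])$ need not have derivative $\int_{\mathbb S^1}f\,d\mathcal H^1$ at $t=0$. Take $K$ a square and $0\le f\not\equiv 0$ supported in a small arc around a diagonal direction. Then $[h_K+tf]=K$ for every $t>0$: the four facet constraints are unchanged and all others are only relaxed. So the right derivative is $0$, while $\int_{\mathbb S^1} f\,d\mathcal H^1>0$.

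Aleksandrov's lemma gives $h_{[h_K+tf]}=h_K+tf+o(t)$ only $S_K$-almost everywhere. That suffices for the area, but not for $P([h_K+tf])=\int h_{[h_K+tf]}\,d\mathcal H^1$. Your remark that $P$ is the first variation of area with respect to the disk concerns $K+\varepsilon B$, not Wulff perturbations. You also cannot exclude such $K$ a priori, because corners are ruled out only afterwards, from this very Euler--Lagrange equation.

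The missing idea is to avoid differentiating the perimeter altogether. Since $h_{[h]}\le h$, Cauchy's formula gives $P([h])\le\int h\,d\mathcal H^1$, with equality at $h=h_K$. Maximality then yields
\[
P_s([h_K+tf])\le \frac{P_s(K)}{P(K)^{2-s}}\Bigl(P(K)+t\int_{\mathbb S^1} f\,d\mathcal H^1\Bigr)^{2-s},
\]
with equality at $t=0$. The right side is trivially differentiable. So as soon as the left side is differentiable at $t=0$, the two derivatives must coincide, which gives exactly $\lambda\int_{\mathbb S^1} f\,d\mathcal H^1$.

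Your second step does not establish differentiability of $t\mapsto P_s([h_K+tf])$ either, for three reasons.
\begin{itemize}
\item The identity with weight $\rho_{K,x}^{-s}(-u\cdot\nu)$ is wrong even up to constants, since it scales like length$^{1-s}$. The correct version has $\rho_{K,x}^{1-s}$ and the factor $1/(s(1-s))$. Equivalently, $P_s(K)=\frac{1}{s(1-s)}\int_{\mathbb S^1}\int_{u^\perp}|K\cap(y+\R u)|^{1-s}\,dy\,d\mathcal H^1(u)$, which is the paper's slicing identity.
\item Bounding the difference quotient by $|K_t\Delta K|/t$ is unjustified: $P_s$ is not Lipschitz in $L^1$ in general, and Lemma~\ref{lem:interpol} only gives the power $1-s$.
\item The bound $\rho_{K,x}(u)\gtrsim|u\cdot\nu|$ fails at regular boundary points of unbounded curvature. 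If $\partial K$ is locally the graph of $|y|^{1+\varepsilon}$, then $\rho_{K,x}\approx\theta^{1/\varepsilon}$ at angle $\theta$ from the tangent, and $H_s^K(x)=+\infty$ for $\varepsilon\le s$. So there is no pointwise domination.
\end{itemize}

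Moreover, the same phenomenon as for the perimeter is present here. The derivative sees $f$ only through $f\circ\nu_K$ at regular points, because $h_{[h_K+tf]}\neq h_K+tf$ off the support of $S_K$. What is needed is an Aleksandrov-type variational formula for chord-power integrals, together with $H_s^K\in L^1(\partial K)$. The paper does not prove this by hand. It reduces $P_s$ to $I_{1-s}$ via the slicing identity and imports Theorem~5.5 and Lemma~4.9 of \cite{LXYZ}; this is Lemma~\ref{lem:firstvar}. Your sketch does not replace that result.
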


We recall that \eqref{eq:plane-6} means that, for any $f\in C(\mathbb S^1)$,
    \begin{align}
        \label{eq:plane-6alt}
            \int_{\partial K}f(\nu_K(x))H_s^K(x)\dd\mathcal H^1(x)
       =\lambda\int_{\mathbb S^1}f(u)\dd\mathcal H^1(u) \,.
    \end{align}

\begin{proof}
    Let $K$ be as in the proposition. After a translation we may assume that the origin is in its interior. Then $h_K$ is strictly positive.

    For every positive function \(h\in C(\mathbb S^1)\), the set
    \begin{equation}
        \label{eq:wulff}
        [h] := \{ x:\ x\cdot u\leq h(u)\text{ for every }u\in S^1\}
    \end{equation}
    is a convex body containing the origin in its interior, and its support function satisfies \(h_{[h]}\leq h\). Therefore the planar Cauchy formula \cite[Eq.~(5.73)]{Schneider}
    \begin{equation}
        P(K')=\int_{\mathbb S^1} h_{K'}(u)\dd\mathcal H^1(u) \,, \label{eq:plane-2}
    \end{equation}
    valid for all convex bodies $K'\subset\R^2$, implies
    \[
        P([h])\leq\int_{\mathbb S^1} h(u)\dd\mathcal H^1(u) \,.
    \] 
    Maximality of \(K\) therefore gives 
    \begin{equation}
    P_s([h])\leq \frac{P_s(K)}{P(K)^{2-s}} \, P([h])^{2-s}
    \leq \frac{P_s(K)}{P(K)^{2-s}} \, \left(\int_{\mathbb S^1} h(u)\dd\mathcal H^1(u) \right)^{2-s}, \label{eq:plane-5}
    \end{equation}
    with equality when \(h=h_K\).

    We take \(h=h_K+tf\) with arbitrary \(f\in C(S^1)\) and with $t\in\R$ sufficiently close to zero. Differentiating \eqref{eq:plane-5} at its equality point $t=0$, gives 
    \[
    \frac{d}{dt}\Big|_{t=0} P_s([h_K+t f])
       =\lambda\int_{\mathbb S^1} f(u)\dd\mathcal H^1(u) \,,
    \]
    provided the derivative on the left side exists. That it does and the value of this derivative are provided in Lemma \ref{lem:firstvar} below. This leads to the claimed formula \eqref{eq:plane-6alt}.
\end{proof}

In the previous proof we used the following formula for the derivative of the fractional perimeter, which we will deduce from \cite[Theorem 5.5]{LXYZ}. We recall the notation \eqref{eq:wulff}.

\begin{lemma}\label{lem:firstvar}
    Let $K$ be a convex body and $s\in(0,1)$. Then $H_s^K\in L^1(\partial K,\dd\mathcal H^1)$. Moreover, for any $f\in C(\mathbb S^1)$, the function $t\mapsto P_s([h_K+t f])$ is differentiable at $t=0$ and
    \begin{align}
        \frac{d}{dt}\Big|_{t=0} P_s([h_K+t f]) = \int_{\partial K}f(\nu_K(x))H_s^K(x)\dd\mathcal H^1(x). \label{eq:plane-1}    
    \end{align}
\end{lemma}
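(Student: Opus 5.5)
The plan is to obtain \eqref{eq:plane-1} from the general variational formula for the fractional perimeter of Wulff shapes in \cite[Theorem 5.5]{LXYZ}. That result is stated in terms of a fractional mean curvature measure (or a kernel on $\partial K$) and the Gauss map, so the work consists of identifying it with $H_s^K$ and establishing integrability. After a translation we may assume $0\in\operatorname{int}K$, so $h_K>0$ and $[h_K+tf]$ is a convex body for $|t|$ small. We also have $[h_K]=K$, since a convex body is the intersection of its supporting half-planes.

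\emph{Step 1: a boundary representation of $P_s$.} For a convex body $L$ we use the divergence theorem in $y$ with the vector field $-\frac{1}{s}\frac{x-y}{|x-y|^{2+s}}$, whose divergence in $y$ equals $|x-y|^{-2-s}$ away from $x$. Then, for a.e.\ $x\in L^c$, one writes $\int_L|x-y|^{-2-s}\dd y$ as a boundary integral over $\partial L$. Alternatively, and more conveniently, we use polar coordinates centered at $x\in L$: integrating $\int_{L^c}|x-y|^{-2-s}\dd y$ along rays gives $\frac1s\int_{\mathbb S^1}\rho_{L,x}(u)^{-s}\dd\mathcal H^1(u)$. Hence
\[
P_s(L)=\frac1s\int_L\int_{\mathbb S^1}\rho_{L,x}(u)^{-s}\dd\mathcal H^1(u)\dd x .
\]
Differentiating this in $t$ along $L_t=[h_K+tf]$ gives a boundary term and an interior term. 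The interior term comes from the change of $\rho_{L_t,x}$, and it can be converted by the divergence theorem again into a boundary term. The normalization $\frac2s$ in \eqref{eq:meancurvfrac} reflects the fact that the two contributions coincide by the symmetry $(x,y)\leftrightarrow$ of the kernel. This is precisely the computation packaged in \cite[Theorem 5.5]{LXYZ}. The normal velocity of $\partial L_t$ at a regular point $x$ with normal $\nu_K(x)$ is $f(\nu_K(x))$, by the standard fact $\frac{d}{dt}h_{[h_K+tf]}(u)=f(u)$ for $\mathcal H^1$-a.e.\ $u$ in the image of the Gauss map (Alexandrov's lemma). I would quote that theorem and check that its curvature quantity, written in the radial-function form, equals $H_s^K$ as defined in \eqref{eq:meancurvfrac}. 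The check uses the integration-by-parts identity $\int_{K^c}|x-y|^{-2-s}\dd y-\int_{K}(\ldots)$, restricted to the half-circle $u\cdot\nu_K(x)<0$ where rays from $x$ enter $K$.

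\emph{Step 2: $H_s^K\in L^1(\partial K)$.} This is the main obstacle, since $\rho_{K,x}(u)\to0$ as $u$ becomes tangent, so $H_s^K$ may blow up near corners. I would first bound $\int_{\partial K}H_s^K\dd\mathcal H^1$ by Fubini, exchanging the order to integrate over chords of $K$. For each direction $u$, the points $x\in\partial K$ with $u\cdot\nu_K(x)<0$ parametrize the chords of $K$ in direction $u$; the integral of $\rho^{-s}$ over these points, weighted by $|u\cdot\nu_K|$, equals the integral over the projection of $K$ onto $u^\perp$ of (chord length)$^{-s}$. The weight $|u\cdot\nu_K|$ is the obstacle, because $H_s^K$ carries no such weight. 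To handle it, I would instead compare with $\frac{d}{dt}$ at $f\equiv1$: then $[h_K+t]$ is the parallel body $K+tB$, and $P_s(K+tB)$ is monotone and finite. The difference quotient is nonnegative, so Fatou applied to the pointwise derivative of the ray representation gives $\int H_s^K\le \liminf_{t\to0}\frac{P_s(K+tB)-P_s(K)}{t}$. That limit is finite because $P_s(K+tB)-P_s(K)\lesssim |(K+tB)\setminus K|^{1-s}\cdot$, or, more directly, because of Lemma~\ref{lem:interpol}-type estimates along the one-parameter family.

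\emph{Step 3: general $f$.} With integrability in hand, dominated convergence applies to the difference quotients for arbitrary $f\in C(\mathbb S^1)$. The domination is by $\|f\|_\infty$ times the $f\equiv1$ integrand, since $[h_K-\|f\|_\infty|t|]\subset[h_K+tf]\subset[h_K+\|f\|_\infty|t|]$. This yields differentiability and \eqref{eq:plane-1}.
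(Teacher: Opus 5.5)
\textbf{There is a genuine gap, at the citation itself.} \cite[Theorem 5.5]{LXYZ} is not a first-variation formula for $P_s$, nor for any fractional curvature measure. It concerns the chord power integrals $I_q(K)=\frac1{n\omega_n}\int_{\mathbb S^{n-1}}\int_{u^\perp}|K\cap(y+\R u)|^q\dd y\dd\mathcal H^{n-1}(u)$, with boundary density $\widetilde V_{q-1}(K,x)=\frac1n\int_{\{\rho_{K,x}>0\}}\rho_{K,x}^{q-1}\dd\mathcal H^{n-1}$; integrability of that density is \cite[Lemma 4.9]{LXYZ}. So what you call ``checking that its curvature quantity equals $H_s^K$'' is really the missing step. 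You need an exact identity between $P_s$ and a chord integral, and this is what the paper proves: $P_s(K)=\frac{n\omega_n}{s(1-s)}I_{1-s}(K)$ for convex $K$, see \eqref{eq:slicing}. It comes from the substitution $x=X+\frac t2u$, $y=X-\frac t2u$, slicing in $X$ along lines parallel to $u$, and the elementary identity $\int_\R\mathbf 1_{\{\lambda+t/2\in I\}}\mathbf 1_{\{\lambda-t/2\notin I\}}\dd\lambda=\min\{t,|I|\}$ for an interval $I$. Integrating this against $t^{-1-s}\dd t$ gives $|I|^{1-s}/(s(1-s))$. With $q=1-s$ and $n=2$ one gets $\frac{n\omega_n}{s(1-s)}\cdot\frac{2q}{\omega_n}\widetilde V_{-s}(K,x)=H_s^K(x)$, and both assertions of the lemma follow at once. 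Your ray formula $P_s(L)=\frac1s\int_L\int_{\mathbb S^1}\rho_{L,x}^{-s}$ is correct, but it is not the functional LXYZ differentiate. Moreover, your heuristic split into a ``boundary term'' and an ``interior term'' produces two infinite quantities: at $x\in\partial L$ one has $\rho_{L,x}=0$ on the outward half-circle. So it cannot stand in for the identification.

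\textbf{Steps 2--3, read as an independent proof, also fail as written.}
\begin{itemize}
\item The difference quotient of the ray representation along $K+tB$ is a difference of two nonnegative terms, each of order $t^{-s}$. Fatou's lemma therefore cannot be applied to it.
\item Finiteness of $\liminf_{t\to0}t^{-1}(P_s(K+tB)-P_s(K))$ does not follow from Lemma~\ref{lem:interpol}. That estimate only gives $|P_s(K+tB)-P_s(K)|\lesssim|(K+tB)\setminus K|^{1-s}\approx t^{1-s}$, i.e.\ a quotient bound of order $t^{-s}\to\infty$. An $O(t)$ bound is essentially equivalent to the integrability you are trying to prove.
\item Monotonicity of $t\mapsto P_s(K+tB)$ is asserted without proof.
\item The domination in Step 3 needs a pointwise integrand that is monotone under inclusion of sets, which the ray representation does not provide. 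It also needs a generalized dominated convergence argument whose hypothesis, $L^1$-convergence of the $f\equiv\pm1$ quotients, is exactly what is not yet known.
\end{itemize}
Once you pass to chords, these points become tractable. Chord lengths are monotone under inclusion, so $P_s$ is monotone and the integrands have a sign. To first order, a chord in direction $u$ gains length $t f(\nu)/|u\cdot\nu|$ at an endpoint with normal $\nu$, which cancels exactly the Jacobian $|u\cdot\nu_K|$ you identified as the obstacle. Making that first-order statement rigorous for Wulff shapes $[h_K+tf]$ with merely continuous $f$ is the real work, and it is what the paper delegates to \cite{LXYZ}.
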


\begin{proof}
    We present the proof in $n$ dimensions. Let $\omega_n$ denote the measure of the unit ball in $\R^n$. Following \cite{LXYZ}, for $q>0$ and a convex body $K\subset\R^n$, we set
    \[
    I_q(K)=\frac1{n\omega_n}\int_{\mathbb S^{n-1}}\int_{u^\perp}
                  |K\cap(y+\mathbb Ru)|^q\dd y\dd\mathcal H^{n-1}(u)
    \] 
    and, for a regular boundary point $x$ of $K$, 
    \[
    \widetilde V_{q-1}(K,x)=\frac1n\int_{\rho_{K,x}>0}\rho_{K,x}(u)^{q-1}\dd\mathcal H^{n-1}(u) \,.
    \] 
    According to \cite[Lemma 4.9]{LXYZ}, $\widetilde V_{q-1}(K,\cdot)\in L^1(\partial K,\mathcal H^{n-1})$ and, according to \cite[Theorem 5.5]{LXYZ}, $t\mapsto I_q([h_K+tf])$ is differentiable at $t=0$ with
    \begin{equation}
        \label{eq:lxyz}
            \frac{d}{dt}\Big|_{t=0} I_q([h_K+tf]) = \frac{2q}{\omega_n} \int_{\partial K} \widetilde V_{q-1}(K,x) f(\nu_K(x)) \dd\mathcal H^{n-1}(x) \,.
    \end{equation}

    In the remainder of the proof, we will show that
    \begin{equation}
        \label{eq:slicing}
        \int_K \int_{K^c} \frac{\dd x\dd y}{|x-y|^{n+s}}=\frac{n\omega_n}{s(1-s)}I_{1-s}(K).
    \end{equation}
    Thus, \eqref{eq:lxyz} with $q=1-s$ proves the lemma.

    To prove \eqref{eq:slicing}, we change variables $x=X+r/2$, $y=X-r/2$, and then we write $r= tu$ with $t\in(0,\infty)$ and $u\in\mathbb S^{n-1}$. Finally, for fixed $u\in \mathbb S^{n-1}$, we decompose $X=y+\lambda u$ with $y\in u^\bot$ and $\lambda\in\R$. In this way, we obtain
    \begin{align*}
        \int_K \int_{K^c} \frac{\dd x\dd y}{|x-y|^{n+s}} & = \int_{\R^n} \int_{\R^n} \mathbf1_{X+\tfrac 12 r \in K} \mathbf1_{X-\tfrac 12r\not\in K} \frac{\dd X\dd r}{|r|^{n+s}} \\
        & = \int_{\mathbb S^{n-1}} \int_0^\infty \int_{\R^n} \mathbf1_{X+\tfrac t2 u \in K} \mathbf1_{X-\tfrac t2u\not\in K} \dd X \, \frac{\dd t}{t^{1+s}} \, \dd\mathcal H^{n-1}(u) \\
        & = \int_{\mathbb S^{n-1}} \int_{u^\bot} \int_0^\infty \int_{\R} \mathbf1_{y+(\lambda +\tfrac t2) u \in K} \mathbf1_{y+(\lambda -\tfrac t2) u\not\in K} \dd\lambda \, \frac{\dd t}{t^{1+s}} \, \dd y \dd\mathcal H^{n-1}(u) \,.    
    \end{align*}
    Therefore, the claimed equality \eqref{eq:slicing} will follow, if we can show that for any $u\in\mathbb S^{n-1}$ and $y\in u^\bot$, we have
    \begin{align}
        \label{eq:slicing2}
        \int_0^\infty \int_{\R} \mathbf1_{y+(\lambda +\tfrac t2) u \in K} \mathbf1_{y+(\lambda -\tfrac t2) u\not\in K} \dd\lambda \, \frac{\dd t}{t^{1+s}}
        = \frac{1}{s(1-s)} |K\cap(y+\mathbb Ru)|^{1-s} \,.
    \end{align}
    In fact, we will show that for any $u\in\mathbb S^{n-1}$, $y\in u^\bot$ and $t>0$, we have
    \begin{align}
        \label{eq:slicing3}
        \int_{\R} \mathbf1_{y+(\lambda +\tfrac t2) u \in K} \mathbf1_{y+(\lambda -\tfrac t2) u\not\in K} \dd\lambda
        = \min\{ t, |K\cap(y+\mathbb Ru)| \} \,.
    \end{align}
    Indeed, \eqref{eq:slicing2} follows from \eqref{eq:slicing3} by integration with respect to $t$.

    To prove \eqref{eq:slicing3}, we note that there is a compact, possibly empty, interval $I\subset\R$ (depending on $u,y$) such that
    \[
    K\cap(y+\mathbb Ru) = \{ y+ \mu u :\ \mu\in I \} \,.
    \]
    Thus, \eqref{eq:slicing3} is equivalent to the one-dimensional identity
    \[
    \int_\R \mathbf1_{\lambda +\tfrac t2 \in I} \mathbf1_{\lambda - \tfrac t2 \not\in I} \dd\lambda = \min\{ t,|I|\} \,,
    \]
    which is elementary to check. This completes the proof of Lemma \ref{lem:firstvar}.
\end{proof}


\subsection{Regularity of convex critical points}\label{equation-6-excludes-corners-and-line-segments}

In this and the next subsection we turn our attention to solutions of the Euler--Lagrange equation \eqref{eq:plane-6} that we derived in the previous subsection. Optimality will not play a role anymore.

\begin{proposition}\label{prop:regularity}
    Let $K\subset\R^2$ be a convex body satisfying \eqref{eq:plane-6} for some $\lambda>0$. Then $\partial K$ is $C^2$ with positive curvature $\kappa$ and we have
    \begin{equation}
        \label{eq:eleqsmooth}
            H^K_s(x) = \lambda \, \kappa(x)
            \qquad\text{for all}\ x\in\partial K \,.
    \end{equation}
\end{proposition}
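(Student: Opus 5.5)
We read \eqref{eq:plane-6} as an identity between the pushforward under the Gauss map of the measure $H_s^K\,\mathrm d\mathcal H^1|_{\partial K}$ and $\lambda$ times arc length on $\mathbb S^1$. The key qualitative input is that $H_s^K$ is bounded above and below by positive constants on the regular part of $\partial K$. For the lower bound, $\rho_{K,x}(u)\le \operatorname{diam} K$, so
\[
H_s^K(x)\ge \tfrac{2}{s}\,\pi\,(\operatorname{diam}K)^{-s}.
\]
For the upper bound, fix an interior disk $B_r(x_0)\subset K$. For a boundary point $x$ and a direction $u$ pointing into $K$ at angle $\theta$ from the tangent line, convexity gives $\rho_{K,x}(u)\gtrsim \theta$, uniformly in $x$, via the cone spanned by $x$ and $B_r(x_0)$. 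Since $\int \theta^{-s}\,\mathrm d\theta<\infty$ for $s<1$, $H_s^K$ is bounded above. In fact it is also bounded above at nonregular (corner) points.

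\textbf{Step 1: excluding corners and segments.} Suppose $x$ is a corner, with normal cone an arc $A\subset\mathbb S^1$ of positive length. Then $\nu_K^{-1}(A)$ meets $\partial K$ only in $\{x\}$ together with sets of normals at nearby boundary points. Testing \eqref{eq:plane-6alt} with $f\ge0$ supported in the interior of $A$ makes the left side vanish, since $\mathcal H^1(\{x\})=0$, while the right side is positive. So there are no corners, and $\nu_K$ is defined and continuous everywhere. Conversely, if $\partial K$ contained a segment $S$ of positive length, the pushforward would place the atom $\int_S H_s^K\ge c\,|S|>0$ at a single normal, contradicting absolute continuity of the right side. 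Hence $\nu_K\colon\partial K\to\mathbb S^1$ is a homeomorphism, and $\partial K$ is $C^1$ and strictly convex.

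\textbf{Step 2: the equation in arc-length form.} Parametrize $\partial K$ by arc length $\sigma$, and let $\theta(\sigma)$ be the angle of $\nu_K$. This is a continuous increasing bijection onto $[0,2\pi)$. Then \eqref{eq:plane-6alt} says $H_s^K(\sigma)\,\mathrm d\sigma=\lambda\,\mathrm d\theta$ as measures. Since $H_s^K$ is bounded, $\theta$ is Lipschitz with $\theta'=H_s^K/\lambda$ a.e. This already gives $\partial K\in C^{1,1}$ with curvature $\kappa=\theta'\ge c>0$ a.e.

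\textbf{Step 3: continuity of $H_s^K$ and bootstrap to $C^2$.} Once $H_s^K$ is continuous in $x$, $\theta$ is $C^1$. Then $\partial K$ is $C^2$ with $\kappa=H_s^K/\lambda$, which is \eqref{eq:eleqsmooth}, and $\kappa>0$ by the lower bound. The main obstacle is proving continuity of $x\mapsto H_s^K(x)$ on a strictly convex $C^{1}$ (indeed $C^{1,1}$) boundary. I would write
\[
H_s^K(x)=\frac2s\int_0^{\pi}\rho_{K,x}(\theta)^{-s}\,\mathrm d\theta,
\]
with the angle measured from the tangent direction at $x$. The integrand depends continuously on $x$ for each fixed $\theta\in(0,\pi)$: the chord endpoint moves continuously because the boundary is $C^1$ and strictly convex, so there are no segments. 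Domination comes from the uniform bound $\rho_{K,x}(\theta)\ge c\,\min(\theta,\pi-\theta)$ established above. This bound needs the $C^{1,1}$ bound from Step 2, or just the interior-disk cone argument. Dominated convergence then gives continuity, which closes the argument.
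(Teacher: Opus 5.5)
Your Step 1 is the paper's argument. The problem is the upper bound on $H_s^K$ that drives Steps 2 and 3: it is false, and the cone argument does not prove it.

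The cone $\operatorname{conv}(\{x\}\cup B_r(x_0))\subset K$ bounds $\rho_{K,x}(u)$ from below only for directions $u$ within angle $\arcsin(r/|x-x_0|)$ of $x_0-x$. A ray from $x$ in a direction between this cone and the supporting line meets the cone only at $x$. For such directions $\rho_{K,x}$ is governed by the local shape of $\partial K$ near $x$.

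Two examples show the bound fails:
\begin{itemize}
\item For the unit square and $x=(1-\varepsilon,0)$, every direction at angle $\theta\in(0,\pi/4)$ from the bottom side towards the corner has $\rho_{K,x}\le\sqrt2\,\varepsilon$. Hence $H_s^K(x)\ge\frac{\pi}{2s}(\sqrt2\,\varepsilon)^{-s}\to\infty$ on regular points.
\item $C^1$ strict convexity, which is all you have after Step 1, does not help either. If $\partial K$ is locally the graph $y=|t|^\alpha$ with $1<\alpha\le 1+s$, then $\rho_{K,0}(\theta)\approx\theta^{1/(\alpha-1)}$ and $H_s^K(0)=+\infty$. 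Nearby $H_s^K$ is finite but unbounded.
\end{itemize}
So after Step 1 you only know $H_s^K\in L^1(\partial K)$, so the normal angle is absolutely continuous, not Lipschitz. The $C^{1,1}$ conclusion of your Step 2 is therefore unproved. So is the domination in Step 3, which you ultimately rest on the same cone argument.

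The missing idea is that boundedness of $H_s^K$ has to come from the Euler--Lagrange equation itself. The paper does this as follows:
\begin{itemize}
\item It writes $H_s^K$ as the boundary integral \eqref{eq:plane-9alt}.
\item It bounds the numerator by $\int_0^t w\,\widetilde\kappa(a+w)\dd w$ and the denominator by the chord--arc estimate \eqref{eq:plane-10}.
\item This gives $\widetilde H_s^K\le C'+C'(k*\widetilde\kappa)$ with $k(w)=|w|^{-s}\mathbf 1_{|w|<r}$.
\item Since $\widetilde\kappa=\widetilde H_s^K/\lambda$ by the equation, this becomes the linear integral inequality \eqref{eq:plane-12alt} for $\widetilde H_s^K$ alone.
\item Iterating Young's inequality, using $k\in L^b$ for $b<1/s$, upgrades $L^1$ to $L^\infty$.
\end{itemize}

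Once $\kappa\in L^\infty$ is known, your Step 3 is a valid alternative to the paper's continuity argument. An interior tangent disk of radius $1/\|\kappa\|_\infty$ at every boundary point gives $\rho_{K,x}(\theta)\ge c\sin\theta$. This follows from Blaschke's rolling theorem, or from an adaptation of the comparison argument in Proposition \ref{cor:uniform-chord}. Dominated convergence then gives continuity of $H_s^K$, and hence $C^2$ regularity with $\kappa=H_s^K/\lambda>0$.
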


In the proof that follows we will use repeatedly that for any convex body $K\subset\R^2$ and any regular boundary point $x$, we have
\begin{equation}
    \label{eq:lowerbound}
    H_s^K(x) \geq (2\pi/s) D^{-s}
    \qquad\text{with}\ D:=\operatorname{diam} K \,.
\end{equation}
Indeed, note that $\rho_{K,x}(u) \leq D$ for all regular boundary points $x$ and all $u\in\mathbb S^1$ with $u\cdot\nu_K(x)<0$ and note that for a regular boundary point $x$ the set of $u\in\mathbb S^1$ with $u\cdot\nu_K(x)<0$ has measure $\pi$. Therefore, \eqref{eq:lowerbound} follows directly from the definition \eqref{eq:meancurvfrac} of the fractional mean curvature.

\begin{proof}
\emph{Step 1.} 
Suppose that \(K\) has a boundary point $x$ with more than one supporting tangent line. Then the interior of its normal cone $\{ u\in\mathbb S^1:\ u\cdot(y-x)\leq 0\  \text{for all}\ y\in K\}$ is a nonempty open arc, which we denote by $I$. For a positive function $f\in C(\mathbb S^1)$ supported in $I$, the left side of \eqref{eq:plane-6alt} vanishes, while the right side is positive. This is
impossible. Thus every boundary point has a unique supporting tangent normal. A
planar convex boundary with this property is \(C^1\).

Suppose instead that the boundary contains a nontrivial line segment.
Its relative interior has one fixed normal \(u_0\in\mathbb S^1\). In view of \eqref{eq:lowerbound}, the left side of \eqref{eq:plane-6} has a
positive atom at \(u_0\), contradicting the nonatomic right side.
Hence \(K\) is strictly convex.

\medskip

\emph{Step 2.}
As a consequence of what we have shown in Step 1, the Gauss map is a homeomorphism between \(\partial K\) and the circle $\mathbb S^1$. Let \(\gamma:\mathbb R/L\mathbb Z\to\partial K\) be
a positively oriented arclength parametrization of $\partial K$, with \(L=P(K)\), and let $\vartheta:\R \to \R$ be continuous, increasing function satisfying
\[
\gamma'(a) = (-\sin\vartheta(a), \cos\vartheta(a))^{\rm T}
\qquad\text{for all}\ a\in\R \,.
\]
(The continuity and monotonicity of $\vartheta$ come from the $C^1$-regularity and strict convexity of $K$, respectively.) Note that $\vartheta$ is not periodic, but satisfies $\vartheta(a+L) = \vartheta(a) + 2\pi$ for all $a\in\R$. This parametrization is chosen such that the normal satisfies
\begin{equation}
    \label{eq:normal}
    \widetilde\nu(a) := \nu_K(\gamma(a)) = (\cos\vartheta(a),\sin\vartheta(a))^{\rm T}
    \qquad\text{for all}\ a\in\R \,.
\end{equation}
Moreover, we write
\[
 \widetilde H_s^K(a) :=H_s^K(\gamma(a))
 \qquad\text{for all}\ a\in\R/L\mathbb Z
\]
and claim that the Euler--Lagrange equation \eqref{eq:plane-6} is equivalent to the equation, as measures on $\R/L\mathbb Z$,
\begin{equation}
\widetilde H_s^K(a)\dd a = \lambda\dd\vartheta(a) \,. \label{eq:plane-7}
\end{equation} 

Indeed, let $g\in C(\R/L\mathbb Z)$ and define $f\in C(\mathbb S^1)$ by
\[
f(\nu_K(\gamma(a))) = g(a)
\qquad\text{for all}\ a\in\R/L\mathbb Z \,.
\]
This is well defined by the homeomorphism properties of $\nu_K$ and $\gamma$, and we have
\[
\int_{\R/L\mathbb Z} g(a) \widetilde H_s^K(a)\dd a = \int_{\partial K} f(\nu_K(x)) H_s^K(x) \dd\mathcal H^1(x) \,.
\]
Meanwhile, in view of \eqref{eq:normal}, a change of variables gives
\[
\int_{\R/L\mathbb Z} g(a) \dd \vartheta(a) = \int_{\mathbb S^1} f(u)\dd\mathcal H^1(u) \,.
\]
The previous two equations, together with \eqref{eq:plane-6}, give
\[
\int_{\R/L\mathbb Z} g(a) \widetilde H_s^K(a)\dd a = \lambda \int_{\R/L\mathbb Z} g(a) \dd \vartheta(a) \,.
\]
This proves \eqref{eq:plane-7}.

Since $H_s^K \in L^1(\partial K,\mathcal H^1)$ by Lemma \ref{lem:firstvar}, we deduce from \eqref{eq:plane-7} that \(\vartheta\) is absolutely continuous and 
\begin{equation}
\vartheta'=\frac{\widetilde H_s^K}\lambda\in L^1(\R/L\mathbb Z) \,. \label{eq:plane-8altb}
\end{equation}
Let $\kappa:\partial K \to \R$ be the curvature of $\partial K$ and 
\[
\widetilde\kappa(a) := \kappa(\gamma(a))
\qquad\text{for all}\ a\in\R/L\mathbb Z \,.
\]
Then $\widetilde\kappa = \vartheta'$ and therefore \eqref{eq:plane-8altb} implies that
\begin{equation}
\widetilde\kappa=\frac{\widetilde H_s^K}\lambda \quad\text{a.e.}
\label{eq:plane-8}
\end{equation}

\medskip

\emph{Step 3.}
Next, we prove higher regularity.

For a \(C^1\) convex curve $\partial K$, the formula \eqref{eq:meancurvfrac} for \(H_s^K\) can be rewritten as 
\begin{equation}
H_s^K(x)=\frac2s\int_{\partial K}
       \frac{(y-x)\cdot\nu_K(y)}{|y-x|^{2+s}}\dd\mathcal H^1(y). \label{eq:plane-9}
\end{equation} 
This formula is well known and appears, for instance, in \cite[Equation (6)]{CSV} with a short indication of a proof. Let us indicate how to derive it from \eqref{eq:meancurvfrac}. Indeed, the radial projection \(y\mapsto(y-x)/|y-x|\) has angular Jacobian \(((y-x)\cdot\nu_K(y))/|y-x|^2\); applying the one-dimensional
area formula to arcs away from \(x\) and then monotone convergence
proves \eqref{eq:plane-9}. Formula \eqref{eq:plane-9} may initially be infinite on an exceptional null set, which causes no difficulty below.

Equivalently, \eqref{eq:plane-9} states that
\begin{equation}
\widetilde H_s^K(a)=\frac2s\int_{\R/L\mathbb Z}
       \frac{(\gamma(a+t)-\gamma(a))\cdot\widetilde\nu(a+t)}{|\gamma(a+t)-\gamma(a)|^{2+s}}\dd t \label{eq:plane-9alt} \,.
\end{equation}

To bound the denominator in \eqref{eq:plane-9alt} we use the fact that, by uniform continuity of the tangent of the compact \(C^1\) curve, there
exist \(0<r<L/2\) and \(c>0\) such that for all $a\in\R/L\mathbb Z$ one has
\begin{equation}
|\gamma(a+t)-\gamma(a)|\geq c|t|\quad (|t|\leq r), \label{eq:plane-10}
\end{equation} 

To bound the numerator, we write
\[
(\gamma(a+t)-\gamma(a))\cdot\widetilde\nu(a+t) =\int_0^t \gamma'(a+v)\cdot\widetilde\nu(a+t)\dd v\,.
\]
Using
\begin{align*}
    \gamma'(a+v)\cdot\widetilde\nu(a+t) & = -\sin\vartheta(a+v)\cos\vartheta(a+t) + \cos\vartheta(a+v) \sin\vartheta(a+t) \\
    & = \sin(\vartheta(a+t) - \vartheta(a+v)) \,,
\end{align*}
we find
\[
|\gamma'(a+v)\cdot\widetilde\nu(a+t)| \leq |\vartheta(a+t) - \vartheta(a+v)| = \left| \int_v^t \widetilde\kappa(a+w)\dd w \right|. 
\]
Consequently, for all $t\in\R$,
\begin{equation}
    \label{eq:plane-11}
    |(\gamma(a+t)-\gamma(a))\cdot\widetilde\nu(a+t)| \leq \left| \int_0^t \left| \int_v^t \widetilde\kappa(a+w)\dd w \right| \dd v \right| = \int_0^t w \widetilde\kappa(a+w)\dd w\,.
\end{equation}

We split the integration in \eqref{eq:plane-9alt} into the region $|t|\leq r$ and its complement. By \eqref{eq:plane-10} and \eqref{eq:plane-11}, the contribution of the former region is bounded by
\begin{align*}
    \frac 2s \, c^{-2-s} \int_{-r}^r |t|^{-2-s}\int_0^t w \widetilde\kappa(a+w) \dd w\dd t
    & = \frac 2s \, c^{-2-s} \int_{-r}^r |w| \widetilde\kappa(a+w) \int_{|w|}^r t^{-2-s}\dd t\dd w\\
    & \leq \frac 2s \, c^{-2-s} \frac1{1+s}\int_{-r}^r |w|^{-s}\widetilde\kappa(a+w)\,dw.
\end{align*}
For the region $r\leq |t|\leq L/2$, we bound
\[
\frac{(\gamma(a+t)-\gamma(a))\cdot\widetilde\nu(a+t)}{|\gamma(a+t)-\gamma(a)|^{2+s}} \leq \frac{1}{|\gamma(a+t)-\gamma(a)|^{1+s}} \leq C \,.
\]
Indeed, compactness and injectivity imply a positive lower bound on the Euclidean distance in this region. Therefore the contribution coming from this integral is uniformly bounded in $a$.

As a consequence, there is a constant $C'$ such that for almost every $a\in\R/L\mathbb Z$, we have
\begin{equation}
\widetilde H_s^K(a)\leq C' +C' \int_{-r}^r |w|^{-s}\widetilde\kappa(a+w)\,dw
       = C'+C'(k*\widetilde\kappa)(a), \label{eq:plane-12}
\end{equation}
where convolution is on the arclength circle and
\(k(w):=|w|^{-s}\mathbf1_{|w|<r}\). Thus, as a consequence of the Euler--Lagrange equation in the form \eqref{eq:plane-8}, we obtain
\begin{equation}
\widetilde H_s^K\leq C'+ \lambda^{-1} C'(k*\widetilde H_s^K) \,. \label{eq:plane-12alt}
\end{equation}

Based on this inequality, it is easy to complete the proof of regularity. We already know, by \eqref{eq:plane-8altb}, that \(\widetilde H_s^K\in L^1\). The kernel $k$ belongs to \(L^b\) for every
\(1\leq b<1/s\). Choose \(b>1\) in that range. Young's inequality and
\eqref{eq:plane-12alt} improve integrability from \(L^p\) to \(L^{p_1}\) whenever \[
 \frac1{p_1}=\frac1p+\frac1b-1>0.
\] 
Thus each application decreases \(1/p\) by the positive number
\(1-1/b\). By adjusting the exponent slightly if an endpoint is
encountered, finitely many iterations give \(\widetilde H_s^K \in L^p\) for some
\(p>1/(1-s)\). For this \(p\), its conjugate exponent satisfies
\(sp'<1\), so \(k\in L^{p'}\). Hölder's inequality in \eqref{eq:plane-12alt} then yields $\widetilde H_s^K\in L^\infty$. By the Euler--Lagrange equation in the form \eqref{eq:plane-8} we conclude
\begin{equation}
\widetilde\kappa\in L^\infty. \label{eq:plane-13}
\end{equation} 
The arclength tangent is therefore Lipschitz and the boundary is
\(C^{1,1}\).

For a \(C^{1,1}\) curve, the numerator in \eqref{eq:plane-9} is bounded by \(C|t|^2\)
on a local arc, and the chord bound \eqref{eq:plane-10} still holds. Hence the
contribution of \(|t|<\varepsilon\) to \eqref{eq:plane-9} is bounded, uniformly in the
basepoint $a$, by \(C\varepsilon^{1-s}\). The integral over the remaining
arcs is continuous in the basepoint. Uniform smallness of the local
contribution shows that \(\widetilde H_s^K\) is continuous everywhere on $\R/L\mathbb Z$. Equations \eqref{eq:plane-8} then identifies the almost-everywhere curvature with a continuous function and \eqref{eq:plane-8} holds everywhere. The curvature is positive by \eqref{eq:plane-8} and \eqref{eq:lowerbound}. Integrating the tangent equation
gives a \(C^2\) parametrization with continuous strictly positive
curvature, as claimed.
\end{proof}


\subsection{The rolling-disk argument}\label{an-elementary-rolling-disk-argument-forces-a-disk}

In this subsection we provide the essential ingredient for the proof of Theorem \ref{prop:planar-convex}.

\begin{proposition}\label{cor:uniform-chord}
    Let $K\subset\R^2$ be a $C^2$ convex body with positive curvature satisfying \eqref{eq:eleqsmooth} for some $\lambda>0$. Then $K$ is a disk.
\end{proposition}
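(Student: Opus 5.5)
The idea is to compare $K$ at a point of maximal curvature and at a point of minimal curvature with tangent disks, using Blaschke's rolling theorem and the monotonicity of $H_s$ under inclusion. Equation \eqref{eq:eleqsmooth} then forces the two extreme curvatures to coincide.

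\emph{Monotonicity of $H_s$ under inclusion.} Suppose two convex bodies $K\subset K'$ share the boundary point $x$, and both are regular there with the same outer normal. Then $\rho_{K,x}\leq\rho_{K',x}$ on $\{u\cdot\nu<0\}$. Since the exponent $-s$ is negative, the definition \eqref{eq:meancurvfrac} gives $H_s^K(x)\geq H_s^{K'}(x)$.

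\emph{Disks and extreme curvatures.} For a disk $B_R$ of radius $R$, scaling gives $H_s^{B_R}\equiv c_s R^{-s}$, where $c_s>0$ depends only on $s$. Since $\partial K$ is $C^2$ and compact, $\kappa$ is continuous, so it attains a maximum $\kappa_M$ at some $x_M$ and a minimum $\kappa_m>0$ at some $x_m$.

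\emph{Rolling theorem.} I would invoke Blaschke's rolling theorem for $C^2$ planar convex curves:
\begin{itemize}
\item if $\kappa\leq\kappa_M$ everywhere, then for every boundary point $x$ the disk of radius $1/\kappa_M$ tangent to $\partial K$ at $x$ from the inside is contained in $K$;
\item if $\kappa\geq\kappa_m$, then $K$ is contained in the disk of radius $1/\kappa_m$ tangent at $x$.
\end{itemize}
To keep the proof self-contained, I would prove this through the support function. Parametrize by the normal angle, so that $h_K''+h_K=1/\kappa$. Translate the disk $B$ so that $h_B$ and $h_K$ agree to first order at the tangency angle $\theta_0$. The difference $g=h_K-h_B$ then satisfies $g''+g\geq0$ (respectively $\leq0$) with $g(\theta_0)=g'(\theta_0)=0$. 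A Sturm comparison on each half-circle $|\theta-\theta_0|\leq\pi$, via the variation of constants formula
\[
g(\theta)=\int_{\theta_0}^{\theta}\sin(\theta-\tau)\,(g''+g)(\tau)\dd\tau ,
\]
gives the sign of $g$ and hence the inclusion of support functions, i.e.\ of the bodies. I expect this step to be the main technical obstacle: the integral gives the sign only for $|\theta-\theta_0|\leq\pi$, and one must check that this covers the whole circle and that the translated disk is genuinely tangent at $x$.

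\emph{Conclusion.} At $x_M$, the disk of radius $1/\kappa_M$ lies inside $K$ and is tangent at $x_M$. Monotonicity and \eqref{eq:eleqsmooth} give
\[
\lambda\kappa_M=H_s^K(x_M)\leq c_s\kappa_M^{s}, \qquad\text{i.e.}\qquad \lambda\leq c_s\kappa_M^{s-1}.
\]
At $x_m$, $K$ lies inside the tangent disk of radius $1/\kappa_m$, so
\[
\lambda\kappa_m=H_s^K(x_m)\geq c_s\kappa_m^{s}, \qquad\text{i.e.}\qquad \lambda\geq c_s\kappa_m^{s-1}.
\]
Since $s-1<0$ and $\kappa_m\leq\kappa_M$, we have $\kappa_M^{s-1}\leq\kappa_m^{s-1}$. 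Combining,
\[
c_s\kappa_m^{s-1}\leq\lambda\leq c_s\kappa_M^{s-1}\leq c_s\kappa_m^{s-1},
\]
so all these inequalities are equalities and $\kappa_M=\kappa_m$. Hence $\partial K$ is a closed $C^2$ curve of constant positive curvature, and $K$ is a disk.
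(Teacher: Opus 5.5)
Your proposal is correct and follows essentially the same route as the paper. The paper likewise compares $h_K$ with the support function of the disk tangent at $\alpha(\theta_0)$, whose centre is $\alpha(\theta_0)-r\,\iota(\theta_0)$; this settles your tangency concern. It then uses the same variation-of-constants formula on $|t|\le\pi$, which covers the whole circle by periodicity, to obtain the inner disk of radius $\rho_{\rm min}$ and the outer disk of radius $\rho_{\rm max}$ at the extremal points, and combines monotonicity of $H_s$ with \eqref{eq:eleqsmooth} exactly as you do.
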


\begin{proof}
    \emph{Step 1.}
    It is convenient to work with a different parametrization of $\partial K$ than in the previous subsection. We recall that $\nu_K$ is a homeomorphism from $\partial K$ to $\mathbb S^1$. Composing with the map $\iota:\R/2\pi\mathbb Z\to\mathbb S^1$, $\theta\mapsto (\cos\theta,\sin\theta)^{\rm T}$, we obtain the parametrization
    \[
    \alpha := \nu_K^{-1}\circ\iota : \R/2\pi\mathbb Z \to \partial K \,.
    \]
    This parametrization is distinguished by the fact that $\nu_K(\alpha(\theta))= (\cos\theta,\sin\theta)^{\rm T}$ for all $\theta\in\R/2\pi\mathbb Z$. We let
    \[
    \rho(\theta) := \frac{1}{\kappa(\alpha(\theta))}
    \qquad\text{for all}\ \theta\in\R/2\pi\mathbb Z
    \]
    denote the \emph{radius of curvature}. By assumption, $\rho$ is continuous and positive. In particular,
    \[
    \rho_{\rm min}:=\min\rho
    \qquad\text{and}\qquad \rho_{\rm max} :=\max\rho
    \]
    are both finite and positive.

    Let $h:=h_K\circ\iota$ be the support function of $K$ as a function of $\theta$. Then it is known that $h$ is $C^2$ and
    \[
    h'' + h = \rho
    \qquad\text{in}\ \R/2\pi\mathbb Z \,;
    \]
    see \cite[Section 2.5, in particular, Equation (2.60)]{Schneider}. Clearly, the supremum defining the support function is attained on the boundary of the convex body, so
    \[
    h(\theta) = \sup_{\theta'\in\R/2\pi\mathbb Z} \iota(\theta)\cdot\alpha(\theta') \,.
    \]
    It is known that this supremum is attained at $\theta'=\theta$, so
    \begin{equation}
        \label{eq:supportfcnmax}
        h(\theta) = \iota(\theta)\cdot\alpha(\theta)
        \qquad\text{for all}\ \theta\in\R/2\pi\mathbb Z\,.
    \end{equation}
    Differentiating this relation and using the fact that the tangent $\alpha'(\theta)$ is orthogonal to the normal $\iota(\theta)$, we get
    \begin{equation}
        \label{eq:supportfcnmax2}
        h'(\theta) = \iota'(\theta)\cdot\alpha(\theta)
        \qquad\text{for all}\ \theta\in\R/2\pi\mathbb Z\,.
    \end{equation}

    \medskip

    \emph{Step 2.}
    Consider a point $\alpha(\theta_0)\in\partial K$ and a disk of radius $r$ that is tangent to $\partial K$ at $\alpha(\theta_0)$. We parametrize the boundary of this disk by the normal angle, similarly as we parametrized $\partial K$, and we let $g:\R/2\pi\mathbb Z\to\R$ denote the support function of the disk in this parametrization. Then
    \[
    g(\theta) = r + x_0 \cdot \iota(\theta) \,,
    \]
    where $x_0\in\R^2$ is the center of the disk. The fact that the disk is tangent to $\partial K$ at $\alpha(\theta_0)$ implies that $x_0 = \alpha(\theta_0)-r \iota(\theta_0)$. This formula for $x_0$ implies that $g(\theta_0) = \alpha(\theta_0)\cdot\iota(\theta_0)$, which, in view of \eqref{eq:supportfcnmax}, implies that
    \begin{equation}
        \label{eq:gtheta0}
        g(\theta_0) = h(\theta_0) \,.
    \end{equation}
    Differentiating the formula for $g$ and using again the value of $x_0$ implies that $g'(\theta_0)= \iota'(\theta_0)\cdot\alpha(\theta_0)$, which, in view of \eqref{eq:supportfcnmax2}, implies that    
    \begin{equation}
        \label{eq:gtheta1}
        g'(\theta_0) = h'(\theta_0) \,.
    \end{equation}
    Finally, one easily verifies that $g'' + g = r$ in $\R/2\pi\mathbb Z$.

    Thus, the function 
    \[
    F:=h-g
    \]
    satisfies
    \[
    F''+F=\rho-r \,,\qquad F(\theta_0)=F'(\theta_0)=0,
    \] 
    so 
    \begin{equation}
    F(\theta_0+t)=\int_0^t\sin(t-v)[\rho(\theta_0+v)-r]\,dv
    \qquad\text{for all}\ t\in\R/2\pi\mathbb Z \,. \label{eq:plane-15}
    \end{equation} 
    For \(0\leq t\leq\pi\), the sine factor in the integrand is nonnegative. For \(-\pi\leq t\leq0\), the sine factor is nonpositive, but the integration orientiation reverses.

    Let us apply this with $r=\rho_{\rm min}$ and with $\theta_0 = \theta_{\rm min}$ such that $\rho(\theta_{\rm min}) = \rho_{\rm min}$. In this case the factor $[\rho(\theta_0+v)-r]$ in \eqref{eq:plane-15} is nonnegative and we conclude that $F(\theta_{\rm min}+t)\geq 0$ for all $|t|\leq\pi$. Thus, $h\geq g$ on $\R/2\pi\mathbb Z$ and we conclude that the disk of radius $\rho_{\rm min}$ tangent at $\alpha(\theta_{\rm min})$ is contained in $K$.

    Similarly, applying the argument with $r=\rho_{\rm max}$ and with $\theta_0=\theta_{\rm max}$ such that $\rho(\theta_{\rm max}) = \rho_{\rm max}$, and noting that the factor $[\rho(\theta_0+v)-r]$ in \eqref{eq:plane-15} is nonpositive and we conclude that $F(\theta_0+t)\leq 0$ for all $|t|\leq\pi$. Thus, $h\leq g$ on $\R/2\pi\mathbb Z$ and we conclude that the disk of radius $\rho_{\rm max}$ tangent at $\alpha(\theta_{\rm max})$ contains $K$.

Let \(c_s:=H_s^D\) denote the constant fractional mean curvature of the unit
disk. Then, by scaling, a radius-\(r\) disk has fractional curvature \(c_s r^{-s}\). At a
tangency point, inclusion implies the corresponding monotonicity of
fractional curvature: if two convex sets \(A\subset B\) are tangent at a common regular boundary point $x$, then \(H_s^B(x)\leq H_s^A(x)\). This follows immediately from \eqref{eq:meancurvfrac}.

Therefore, the containment of the minimal and maximal curvature disks that we proved above implies that
\[
H_s^K(\alpha(\theta_{\rm min})) \leq c_s \rho_{\rm min}^{-s}
\qquad\text{and}\qquad
H_s^K(\alpha(\theta_{\rm max})) \geq c_s \rho_{\rm max}^{-s} \,.
\]
At the same time, we have
\[
\kappa(\alpha(\theta_{\rm min})) = \frac{1}{\rho(\theta_{\rm min})} = \frac{1}{\rho_{\rm min}}
\qquad\text{and}\qquad
\kappa(\alpha(\theta_{\rm max})) = \frac{1}{\rho(\theta_{\rm max})} = \frac{1}{\rho_{\rm max}} \,.
\]
Combining these relations with the Euler--Lagrange equation \eqref{eq:eleqsmooth}, we obtain
\[
\frac{1}{\rho_{\rm min}} \leq \frac{c_s}{\lambda}\, \rho_{\rm min}^{-s}
\qquad\text{and}\qquad
\frac{1}{\rho_{\rm max}} \geq \frac{c_s}{\lambda}\, \rho_{\rm max}^{-s} \,.
\]
Thus, $\rho_{\rm max}\leq (c_s/\lambda)^{1/(1-s)} \leq \rho_{\rm min}$, which means that $\rho$ is constant. Therefore
the curvature is constant and \(K\) is a disk, as claimed.
\end{proof}


\subsection{Proof of Theorem \ref{prop:planar-convex}}\label{disk-constant}

Let us summarize what we have shown so far in this section. In Subsection \ref{existence-of-a-convex-maximizer} we have shown that the optimization problem \eqref{eq:optproblem} has an optimizer and then in Proposition \ref{eleq} we have derived the Euler--Lagrange equation satisfied by this optimizer. In Proposition \ref{prop:regularity} we have shown that solutions of the Euler--Lagrange equation are $C^2$ with positive curvature and, finally, in Proposition \ref{cor:uniform-chord} we have shown that the only positively curved $C^2$ solutions to this equation are disks. These facts together imply that
\[
\frac{P_s(K)}{P(K)^{2-s}} \leq \frac{P_s(D)}{P(D)^{2-s}}
\qquad\text{for all convex bodies}\ K\subset\R^2 \,.
\]
Here $D$ is a disk, which, by translation and dilation invariance, can be taken to be the unit disk.

It remains to compute the constant $P_s(D)/P(D)^{2-s}$ explicitly. While this is well known, we present a proof that is natural in view of the arguments in this section. At a point of the unit disk, an inward ray making angle \(\alpha\) with
the inward normal has length \(2\cos\alpha\). Therefore, for all $x\in\partial D$,
\[
 H^D_s(x)=\frac2s\int_{-\pi/2}^{\pi/2}(2\cos\alpha)^{-s}\,d\alpha
 =\frac{2^{1-s}\sqrt\pi}{s}
       \frac{\Gamma((1-s)/2)}{\Gamma(1-s/2)}.
\] 
Inserting this into the Euler--Lagrange equation \eqref{eq:eleqsmooth} and recalling the value of $\lambda$ from Proposition~\ref{eleq} gives
\[
\frac{2^{1-s}\sqrt\pi}{s}\frac{\Gamma((1-s)/2)}{\Gamma(1-s/2)} = \frac{(2-s)P_s(D)}{P(D)} \,.
\]
Multiplying both sides by $P(D)^{-1+s} = (2\pi)^{-1+s}$ gives the claimed value of the constant. This completes the proof of Theorem \ref{prop:planar-convex}.


\section{Arbitrary planar sets}\label{sec:planar-general}

Our goal in this section is to remove the convexity assumption from Theorem \ref{prop:planar-convex}. This is achieved in Subsection \ref{sec:reductionconv}. In the final Subsection \ref{sec:additional} we give a simpler proof of the sharp inequality without classifying optimizers.


\subsection{Reduction to the convex case}\label{sec:reductionconv}

We begin by noting that the optimization problem corresponding to Theorem \ref{thm:plane} has an optimizer, that is, the supremum in
\begin{equation}\label{eq:optproblemgen}
    S_s:= \sup \left\{ \frac{P_s(E)}{P(E)^{2-s}} :\ E\subset\R^2 \ \text{measurable with}\ 0<|E|<\infty \,,\ P(E)<\infty \right\}
\end{equation}
is attained. 

This is proved in \cite{DCNRV}. In the present, two-dimensional case there is the following alternative proof, based on arguments that appear in the present paper. First one argues as in Step 1 of the proof of Theorem \ref{thm:reductionconv} below that it suffices to consider indecomposable sets. This allows one to argue as in Subsection \ref{existence-of-a-convex-maximizer}. Indeed, indecomposability leads to (essential) boundedness (see, for example, \cite[Lemma 2.13]{DMNP}, which is deduced from Fleming's theorem, used in Step 2 of the proof of Theorem~\ref{thm:reductionconv}). The use of Blaschke's selection theorem in Subsection \ref{existence-of-a-convex-maximizer} is replaced by \cite[Theorem 12.26]{Maggi}.

Our goal in this subsection will be to prove the following result.

\begin{theorem}\label{thm:reductionconv}
    Let $E\subset\R^2$ be an optimizer for \eqref{eq:optproblemgen}. Then $E$ is convex (up to a set of zero Lebesgue measure). 
\end{theorem}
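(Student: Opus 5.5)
We argue by contradiction in three steps: reduce to indecomposable optimizers, then to simply connected ones, and finally show that a non-convex optimizer can be strictly improved by convexification. Throughout we use that the ratio $P_s(E)/P(E)^{2-s}$ is scale invariant and that $S_s>0$.

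\emph{Step 1 (indecomposability).} Suppose $E=E_1\cup E_2$ is a nontrivial decomposition in the sense of \cite{Maggi}, with $|E_i|>0$ and $P(E)=P(E_1)+P(E_2)$. For disjoint sets,
\[
P_s(E)=P_s(E_1)+P_s(E_2)-2\iint_{E_1\times E_2}\frac{\dd x\dd y}{|x-y|^{2+s}}\leq P_s(E_1)+P_s(E_2).
\]
Hence
\[
P_s(E)\leq S_s\big(P(E_1)^{2-s}+P(E_2)^{2-s}\big)<S_s\big(P(E_1)+P(E_2)\big)^{2-s}=S_s P(E)^{2-s},
\]
where the strict inequality is strict superadditivity of $t\mapsto t^{2-s}$ (as $2-s>1$) applied to $P(E_i)>0$. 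This contradicts optimality, so $E$ is indecomposable. By \cite[Lemma 2.13]{DMNP}, $E$ is then essentially bounded.

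\emph{Step 2 (filling holes).} Next we use the planar structure theorem for indecomposable sets of finite perimeter (Ambrosio--Caselles--Masnou--Morel). Up to null sets, $\partial^*E$ consists of an outer Jordan rectifiable curve $J_0$ and countably many inner Jordan curves $J_k$. Let $F$ be the saturation of $E$, namely the interior of $J_0$. Then
\[
P(F)=\mathcal H^1(J_0)\leq P(E),
\]
and this inequality is strict if there is any hole of positive measure. On the fractional side, $P_s$ does not obviously increase when holes are filled, so we proceed differently. Each hole $H_k$ is a set with $P(H_k)=\mathcal H^1(J_k)$, and we can write
\[
P_s(E)=P_s(F\setminus\cup_k H_k)\leq P_s(F)+\sum_k P_s(H_k).
\]
This is subadditivity: $P_s(A\setminus B)\leq P_s(A)+P_s(B)$ for $B\subset A$. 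Applying $S_s$ to each term and then superadditivity as in Step 1 gives
\[
P_s(E)\leq S_s\Big(P(F)^{2-s}+\sum_k P(H_k)^{2-s}\Big)<S_s\Big(P(F)+\sum_k P(H_k)\Big)^{2-s}=S_sP(E)^{2-s}
\]
whenever some hole has positive measure. This contradicts optimality. So $E=F$ is simply connected, bounded by a single rectifiable Jordan curve.

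\emph{Step 3 (convexification).} Let $K=\operatorname{conv}E$. In the plane the boundary of the convex hull of a Jordan domain is obtained by replacing arcs of $J_0$ by chords. Thus $P(K)\leq P(E)$, and by Cauchy's formula \eqref{eq:plane-2} this inequality is strict unless $|K\setminus E|=0$. The hard part is showing that $P_s$ does not drop too much under this operation. Here we use reflection. Each component $U$ of $K\setminus E$ (a ``pocket'') is bounded by a chord $\ell$ and an arc $\sigma$ of $J_0$. Reflecting the arc $\sigma$ across the line through $\ell$ produces a set $E'=(E\setminus U^{-})\cup \tilde U$ which has the same perimeter as $E$. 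For the fractional perimeter we aim to show $P_s(E')\geq P_s(E)$, with equality only in degenerate cases, by a two-point rearrangement (polarization) across the line of $\ell$. The line separates $E$ from the pocket, and polarization is known not to increase $P_s$ on the relevant pairs. If this comparison fails to apply directly, the alternative is to combine it with the Step 2 argument, treating the reflected configuration through its holes.

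Iterating the reflection over pockets, or rather applying it once through a limiting argument using Lemma \ref{lem:interpol} for $L^1$-continuity of $P_s$, produces a convex body $K'$ with
\[
P(K')\leq P(E)\quad\text{and}\quad P_s(K')\geq P_s(E).
\]
At least one of these inequalities is strict if $E$ is not convex, which contradicts maximality. Alternatively, convexity follows once we know $K'$ is again an optimizer and use Theorem \ref{prop:planar-convex}.

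I expect the main obstacle to be Step 3: establishing the monotonicity of $P_s$ under reflection or convexification for non-smooth Jordan boundaries, including its strictness. A secondary obstacle is the careful use of the structure theory in Step 2 for general finite perimeter sets.
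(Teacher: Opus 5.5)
Your Steps 1 and 2 are sound and essentially match the paper. You get strictness from strict superadditivity of $t\mapsto t^{2-s}$ rather than from the positive interaction term, and you fill all holes at once using $P_s(A\setminus B)\le P_s(A)+P_s(B)$; both work.

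The genuine gap is Step 3, which is the heart of the theorem, and the mechanism you propose there fails. Let $D$ be a pocket bounded by a segment $J$ with endpoints $a,b\in J_0$, lying on a supporting line $\ell$ of $\operatorname{conv}\overline E$, and by an arc $\alpha$ of $J_0=\alpha\cup\beta$. Let $\sigma$ be reflection across $\ell$.
\begin{itemize}
\item \emph{The reflected set is misdescribed.} Flipping $\alpha$ gives the curve $\beta\cup\sigma(\alpha)$, whose interior is $E'=E\cup D\cup\sigma(D)$. Nothing is removed from $E$, so your $E'=(E\setminus U^-)\cup\tilde U$ is not the right set.
\item \emph{The geometry is wrong.} Since $\ell$ supports the convex hull, $E$ and $D$ lie on the \emph{same} side of $\ell$. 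The line separates $D$ from $\sigma(D)$, not $E$ from the pocket.
\item \emph{Polarization points the wrong way.} It does not increase $P_s$, which is the opposite of the inequality $P_s(E')\geq P_s(E)$ you need. Moreover $E'$ is not a polarization of $E$ at all, since $|E'|=|E|+2|D|$.
\item The fallback of treating the reflected configuration through its holes is not an argument.
\end{itemize}

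The missing idea is a short direct computation using the reflection symmetry of the added set $A:=D\cup\sigma(D)$. Since $E$ and $A$ are disjoint,
\[
P_s(E')-P_s(E)=P_s(A)-2\iint_{A\times E}\frac{\dd x\dd y}{|x-y|^{2+s}} .
\]
Let $H_-$ be the half-plane bounded by $\ell$ that contains $E$, and set $G:=H_-\setminus D$. Then $A^c=G\mathbin{\dot\cup}\sigma(G)$ up to a null set, so by symmetry $P_s(A)=2\iint_{A\times G}|x-y|^{-2-s}\dd x\dd y$. Since $E\subset G$,
\[
P_s(E')-P_s(E)=2\iint_{A\times(G\setminus E)}\frac{\dd x\dd y}{|x-y|^{2+s}}>0 .
\]
You also need $P(E')=\mathcal H^1(\beta)+\mathcal H^1(\sigma(\alpha))=P(E)$. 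This requires checking that $\beta\cup\sigma(\alpha)$ is a Jordan curve. The two arcs lie in opposite closed half-planes, so any common point lies on $\ell$, is fixed by $\sigma$, and hence belongs to $\alpha\cap\beta=\{a,b\}$. With these two facts, a \emph{single} reflection already contradicts optimality.

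Your iteration or limiting procedure towards a convex body $K'$ is therefore unnecessary. As written it is also unjustified: for a general rectifiable Jordan curve the flipping process need not terminate, and you give neither a convergence argument nor the strictness you invoke at the end. Similarly, $P(\operatorname{conv}E)<P(E)$ does not follow from Cauchy's formula, which applies only to convex bodies, though this inequality is not needed.
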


Clearly, Theorem \ref{thm:reductionconv}, together with Theorem \ref{prop:planar-convex}, implies Theorem \ref{thm:plane}.

\begin{proof}
    \emph{Step 1.} In this step we show that any optimizer for \eqref{eq:optproblemgen} is indecomposable and has no holes.

    We recall that (in any dimension $n$) a set $E\subset\R^n$ of finite perimeter is called \emph{incomposable} if for any measurable partition $(A,B)$ of $E$ with $P(E) = P(A)+P(B)$ at least one of $A$ and $B$ has measure zero. Moreover, a \emph{hole} of an indecomposable set $E$ is an $M$-connected component of $\R^n\setminus E$ with finite, positive measure. We refer to \cite{ACMM01} for a detailed discussion of these notions.

    We assume, by contradiction, $E=A\cup B$ is an optimizer with $|A\cap B|=0$, $P(E)=P(A)+P(B)$ and both $|A|$ and $|B|$ positive. We have
    \[
    P_s(E) = P_s(A) + P_s(B) - 2 \iint_{A\times B} \frac{\dd x \dd y}{|x-y|^{2+s}}
    < P_s(A) + P_s(B) \,,
    \]
    where the inequality is strict since $A$ and $B$ have positive measure. Recalling that $S_s$ denotes the optimal constant (see \eqref{eq:optproblemgen}), we find
    \[
    P_s(E) < P_s(A) + P_s(B) \leq S_s \left( P(A)^{2-s} + P(B)^{2-s} \right) \leq S_s \left( P(A) + P(B) \right)^{2-s} = S_s \, P(E)^{2-s} \,,
    \]
    which contradicts optimality of $E$. Thus, $E$ is indecomposable.

    Next, assume, again by contradiction, that for an optimizer $E$ there is an $M$-connected component $Y$ of $\R^n\setminus E$ with finite measure. Let $E':=E\cup Y$. It follows from \cite[Proposition 9]{ACMM01} and its proof that $P(E)=P(E')+P(Y)$. Moreover, we have
    \[
    P_s(E) = P_s(E') + P_s(Y) - 2 \iint_{Y\times (E')^c} \frac{\dd x \dd y}{|x-y|^{2+s}} < P_s(E') + P_s(Y) \,,
    \]
    where the inequality is strict since $Y$ and $(E')^c$ have positive measure. We find
    \[
    P_s(E) < P_s(E') + P_s(Y) \leq S_s \left( P(E')^{2-s} + P(Y)^{2-s} \right) \leq S_s \left( P(E') + P(Y) \right)^{2-s} = S_s \, P(E)^{2-s} \,,
    \]
    which contradicts optimality of $E$. Thus, $E$ has no holes.

    We emphasize that this step did not use the two-dimensional nature of the problem. This will enter in the next step.

    \medskip

    \emph{Step 2.} Let $E$ be an optimizer for \eqref{eq:optproblemgen}. From Step 1, we know that $E$ is indecomposable and has no holes. By a result of Fleming \cite{F57,F60} (see also \cite[Subsection 4.2.25]{F69}, \cite[Theorem~7]{ACMM01}, \cite[Proposition~2.7]{DMNP}) there is a rectifiable Jordan curve \(\Gamma\) such that \(E\) agrees almost everywhere with its bounded interior and
    \[ P(E)=\mathcal H^1(\Gamma) \,. \]
    From now on, we replace \(E\) by this bounded interior.
    
    With this normalization in place, we want to show that $E$ is convex. Assume, by contradiction, that it is not. Setting $K:=\operatorname{conv}(\overline E)$, the contradiction assumption implies that $\partial K\not\subset\Gamma$. (Indeed, an inclusion of one Jordan curve in another forces equality, which
here would give $E=\operatorname{int}K$.) Choose a connected component
of the relatively open set $\partial K\setminus\Gamma$. Every
extreme point of $K$ belongs to $\Gamma$, so this component is an open
line segment $J=(a,b)$. Its closure $\overline J$ lies in a supporting line $\ell$,
has distinct endpoints on $\Gamma$, and has relative interior
disjoint from $\overline E$.

The points $a$ and $b$ divide the Jordan curve $\Gamma$ into two complementary arcs $\alpha$ and $\beta$, which are disjoint except for their endpoints. Then $J\cup\alpha$ and $J\cup\beta$ are both Jordan curves and, by the Jordan curve theorem, each one separates the exterior into one bounded and one unbounded component. One of the bounded components obtained in this way contains $E$ and one does not. We make the convention that the interior of $J\cup\beta$ contains $E$ and we let $D$ denote the interior of $J\cup\alpha$. Then $D$ is open, bounded, non-empty and disjoint from $E$. In particular, $|D|>0$. Both $E$ and $D$  lie in the supporting half-plane $H_-$ determined by $\ell$ that contains $K$.

Let $\sigma$ be
reflection across $\ell$. The curve
$\Gamma':=\beta\cup\sigma(\alpha)$ is again a rectifiable Jordan
curve. Indeed, the two arcs lie in opposite closed half-planes, so
an intersection must lie on $\ell$. Reflection fixes such a point,
and hence an intersection would already be an intersection of
$\alpha$ with $\beta$; these have only the endpoints $a,b$ in common.
The bounded interior of $\Gamma'$ is,
up to null sets,
\[
 E':=E\cup D\cup\sigma(D) \,.
\]
Reflection preserves arc length. Using the perimeter identity for
rectifiable Jordan curves, we obtain
\[
 P(E')=\mathcal H^1(\beta)+\mathcal H^1(\sigma(\alpha))
      =\mathcal H^1(\Gamma)=P(E) \,.
\]

Put $A:=D\cup\sigma(D)$ and $G:=H_-\setminus D$.
Reflection symmetry gives $A^c=G\mathbin{\dot\cup}\sigma(G)$ (up
to the reflecting line) and, consequently,
\[
P_s(A)=2\iint_{A\times G} \frac{\dd x \dd y}{|x-y|^{2+s}} \,.
\]
Since $E\subset G$ and $|E\cap A|=0$ we have
\[
 P_s(E')-P_s(E)
   =P_s(A)-2\iint_{A\times E} \frac{\dd x \dd y}{|x-y|^{2+s}}
   =2\iint_{A\times (G\setminus E)} \frac{\dd x \dd y}{|x-y|^{2+s}} >0 \,,
\]
where the inequality is strict since $A$ and $G\setminus E$ have positive measure. We find
\[
 P_s(E)<P_s(E')\leq S_s \, P(E')^{2-s}=S_s \, P(E)^{2-s} \,,
\]
which contradicts optimiality of $E$ Thus, $E$ is convex, as claimed.
\end{proof}

We emphasize that, in the context of sets with smooth boundary, the reflection argument in the above proof is one of the well-known ways to reduce the proof of the standard isoperimetric inequality \eqref{eq:isoper} to the convex case. In the above proof we extend this argument to the case of the fractional perimeter. What makes the proof somewhat subtle is our limited knowledge of the regularity of the boundary of optimizers. We overcame this difficulty using Fleming's theorem \cite{F57}. Another option would be to use the regularity theory for almost minimizers of the perimeter; see, e.g., \cite[Part III]{Maggi}.

Finally, we mention that the proof of Theorem \ref{thm:plane} that we have given by combining Theorem \ref{prop:planar-convex} and Theorem \ref{thm:reductionconv} is slightly redundant. Once we have the existence of an optimizer of the optimization problem \eqref{eq:optproblemgen} and the validity of Theorem \ref{thm:reductionconv}, we already know the existence of an optimizer of the optimization problem \eqref{eq:optproblem}, so the material in Subsection \ref{existence-of-a-convex-maximizer} is unnecessary.

We have chosen to include Subsection \ref{existence-of-a-convex-maximizer}, because in the following subsection we will give a proof of the sharp inequality \eqref{eq:planar-main} (without classifying the cases of equality) that requires the existence of an optimizer of \eqref{eq:optproblem}. The advantage of this simpler proof is that it does not rely on Fleming's theorem.


\subsection{Additional remark}\label{sec:additional}

Let us explain how the sharp inequality \eqref{eq:planar-main} follows from the result for convex sets in Theorem~\ref{prop:planar-convex}. The proof uses the same reflection idea as the proof of Theorem \ref{thm:reductionconv}, but is technically simpler, since it only deals with sets with polyhedral boundary. This comes at the expense of not classifying the cases of equality.

\begin{proof}[Proof of \eqref{eq:planar-main}]
    \emph{Step 1.}
    Let $E\subset\R^2$ be a measurable with $0<|E|<\infty$ and $P(E)<\infty$. It follows from \cite[Proposition 13.13]{Maggi} that there is a sequence $(E_j)$ of bounded sets with polyhedral boundary such that $|E_j\Delta E|\to 0$ and $P(E_j)\to P(E)$ as $j\to\infty$. Thus, if we prove the inequality \eqref{eq:planar-main} for $E_j$, it will also hold for $E$. Thus, we may assume that $E$ is bounded with polyhedral boundary.

    \medskip

    \emph{Step 2.}
    We decompose the bounded set \(E\) with polyhedral boundary into finitely many indecomposable components \(E_i\) and, for each $E_i$, we consider its saturation \(F_i\) and its holes \(Y_{ij}\). Then, by similar computations as in Step 1 of the proof of Theorem \ref{thm:reductionconv},
    $$ 
    P_s(E)\le\sum_iP_s(E_i), \qquad P_s(E_i)\le P_s(F_i)+\sum_jP_s(Y_{ij}), 
    $$
    while
    $$ 
    P(E)=\sum_i\left(P(F_i)+\sum_jP(Y_{ij})\right). 
    $$
    Note that each one of the sets $F_i$ and $Y_{ij}$ is bounded by a simple polygon. Thus, if we prove the inequality for each such set, then, in view of the elementary inequality
    $$ 
    \sum_k a_k^{\,2-s}\le\left(\sum_k a_k\right)^{2-s},
    $$
    it will also hold for $E$. Thus, we may assume that $E$ is the interior of a simple polygon. 
    
    \medskip

    \emph{Step 3.}
    For \(E\) the interior of a simple polygon, let \(D\) be one of
its pockets, that is, a maximal connected region interior to the convex hull and exterior to the polygon. Let \(\ell\) be the supporting
line of the convex hull of $E$ that bounds one side of $D$, let \(H_-\) be the open half-plane determined by $\ell$ that contains \(E\), and let \(\sigma\) be the reflection in \(\ell\). Thus
\(D\subset H_-\setminus E\) and \(\sigma(D)\subset \R^2\setminus\overline{H_-} =:H_+\). Up to null sets, the polygon obtained by reflecting the pocket chain has interior
\[
 E':=E\cup D\cup \sigma(D) \,.
\] 
It is still a simple polygon, and its perimeter equals that of \(E\),
because the changed boundary chain is replaced by an isometric
reflection.

Put \(A:=D\cup \sigma(D) \) and \(G:=H_-\setminus D\). By the same computation as in Step 2 of the proof of Theorem \ref{thm:reductionconv} we find
\[
P_s(E')-P_s(E)
   =2\iint_{A\times (G\setminus E)} \frac{\dd x \dd y}{|x-y|^{2+s}} >0 \,,
\]

Thus, the sharp inequality follows for $E$ if we can prove it for $E'$. Of course this observation can be iterated. The Erd\H{o}s--Nagy theorem (see \cite{DGOT} for the history and a complete proof) states that every simple planar polygon becomes
convex after finitely many successive single-pocket reflections. Since Theorem \ref{prop:planar-convex} gives the sharp inequality \eqref{eq:planar-main} for convex sets and, in particular, for convex polygons, we obtain the same inequality for arbitrary simple polygons, as we set out to prove.    
\end{proof}

Finally, we mention that the preceding proof allows one to avoid the existence proof in Subsection~\ref{sec:reductionconv}, at the expense of keeping the existence proof in Subsection \ref{existence-of-a-convex-maximizer}. Indeed, Theorem \ref{prop:planar-convex} (whose proof uses Subsection \ref{existence-of-a-convex-maximizer}) and the preceding proof give the sharp inequality \eqref{eq:planar-main} with equality for disks. To show that these are the only cases of equality, one can directly appeal to Theorem \ref{thm:reductionconv}, without the need of an existence proof. Thus, only one of the existence proofs in Subsections \ref{existence-of-a-convex-maximizer} and \ref{sec:reductionconv} is needed for a complete proof of Theorem \ref{thm:plane} and readers can make a choice according to their taste.


\subsection*{Acknowledgements}

R.L.F. acknowledges partial support through the German Research Foundation through EXC-2111-390814868, TRR 352–Project-ID 470903074, and FR 2664/3-1. P.I. acknowledges partial support from the US NSF CAREER grant DMS-2152401, US NSF grant DMS-2554183, a Simons Fellowship, and a Humboldt Research Fellowship for Experienced Researchers. The authors acknowledge the use of AI tools. All mathematical arguments and proofs in the final manuscript were checked and written by the authors.



\end{document}